\theoremstyle{plain}
\newtheorem{theorem}{Theorem}[section]
\newtheorem{lemma}[theorem]{Lemma}
\theoremstyle{definition}
\theoremstyle{remark}
\newtheorem{remark}[theorem]{Remark}
\numberwithin{equation}{section}
\newcommand{\vr}{\varrho}
\newcommand{\vu}{\textbf{\textup{u}}}
\newcommand{\vv}{\textbf{\textup{v}}}
\newcommand{\vt}{\vartheta}
\newcommand{\vq}{\textbf{q}}
\newcommand{\vw}{\textbf{w}}
\newcommand{\ve}{\varepsilon}
\newcommand{\vS}{\mathbb{S}}
\newcommand{\vf}{\textbf{f}}
\newcommand{\Div}{{\rm div}_x}
\newcommand{\Grad}{\nabla_x}
\newcommand{\DerTime}{\partial_t}
\newcommand{\dx}{\textup{d}x}
\newcommand{\dt}{\textup{d}t}
\definecolor{pinegreen}{rgb}{0.0, 0.47, 0.44}
\def\softd{{\leavevmode\setbox1=\hbox{d}%
		\hbox to 1.05\wd1{d\kern-0.4ex{\char039}\hss}}}
\renewcommand{\email}[2][]{%
	\ifx\emails\@empty\relax\else{\g@addto@macro\emails{,\space}}\fi%
	\@ifnotempty{#1}{\g@addto@macro\emails{\textrm{(#1)}\space}}%
	\g@addto@macro\emails{#2}%
}
	\title{On a blow-up criterion for the  Navier--Stokes--Fourier system under general equations of state }
\author{Anna Abbatiello}
\address{University of Campania ``L.~Vanvitelli", Department of Mathematics and Physics, \\Viale A.~Lincoln 5, 81100 Caserta, Italy.}
\email{anna.abbatiello@unicampania.it}
\author{Danica Basari\'{c}}
\address{Institute of Mathematics of the Academy of Sciences of the Czech Republic, \\ \v Zitn\' a 25, 115 67 Praha 1, Czech Republic.}
 \email{basaric@math.cas.cz}
\author{Nilasis Chaudhuri}
\address{University of Warsaw, Faculty of Mathematics, Informatics and Mechanics, \\
Stefana Banacha 2, 02-097 Warsaw, Poland.} 
\email{nchaudhuri@mimuw.edu.pl}
\date{}
\begin{document}
	
	\maketitle
	
	\begin{abstract}
		In this paper we prove a blow-up criterion for the compressible Navier-Stokes-Fourier system for general thermal and caloric equations of state with inhomogeneous boundary conditions for the velocity and the temperature.  Assuming only that Gibb's equation and the thermodynamic stability hold, we show that solutions in a certain regularity class remain regular under the condition that the density, the temperature and the modulus of the velocity are bounded.
	\end{abstract}

	\section{Introduction}
A conditional regularity criterion  for solutions to  a system of partial differential equations in fluid mechanics is a condition involving lower-order norms which, if satisfied, implies that the solutions remains regular; in particular,  it can be applied to show that a local strong solution can be extended  beyond its maximal time of existence. A direct consequence of the aforementioned result is a blow-up criterion meaning that if a blow-up of solutions occurs then some lower-order norms are not bounded. The goal of this paper is to provide a conditional regularity result for solutions to  the Navier-Stokes-Fourier system describing the motion of a compressible viscous and heat-conducting fluid,
\begin{subequations}\label{problem}
	\begin{align}
		\DerTime \vr + \Div (\vr \vu)&=0, \label{continuity equation}\\
		\DerTime (\vr \vu) + \Div (\vr \vu \otimes \vu) + \nabla_x p(\vr, \vt) &= \Div \vS (\mathbb{D}_x \vu) + \vr \vf, \label{balance of momentum}\\
		\DerTime \big( \vr e(\vr, \vt) \big) + \Div \big(\vr e(\vr, \vt) \vu \big) + \Div \vq(\Grad \vt) &= \vS (\mathbb{D}_x \vu) : \mathbb{D}_x \vu - p(\vr, \vt) \Div \vu. \label{balance internal energy}
	\end{align}
	Here, the unknown quantities are the density $\vr=\vr(t,x)$, the velocity $\vu=\vu(t,x)$ and the absolute temperature $\vt= \vt(t,x)$ of the fluid. 
	
	At any time $t \in (0,T)$, we suppose that the fluid is confined to a bounded domain $\Omega \subset \mathbb{R}^3$ with impermeable boundary, where the temperature and the (tangential) velocity are given on $\partial \Omega$,
	\begin{align}
		\vu|_{\partial \Omega} &= \vu_B, \ \vu_B=\vu_B(x), \quad \vu_B \cdot \textbf{n}=0, \label{boundary velocity}\\
		\vt|_{\partial \Omega} &= \vt_B, \ \vt_B=\vt_B(x), \quad \vt_B\geq \underline{\vt}_B >0. \label{boundary temperature}
	\end{align}
		
	The problem is closed prescribing the initial data 
	\begin{equation} \label{initial data}
		\vr(0,\cdot) =\vr_0, \quad \vu(0,\cdot)=\vu_0, \quad \vt(0,\cdot)=\vt_0.
	\end{equation}
\end{subequations}	
	For definiteness, we extend the boundary data $(\vu_B, \vt_B)$ as $(\widetilde{\vu}, \widetilde{\vt})$, where the latter are the unique solutions of the following problems,
	\begin{align}
		\Div \vS(\mathbb{D}_x \widetilde{\vu}) &=0 \ \mbox{in } \Omega, \quad \widetilde{\vu}|_{\partial \Omega}=\vu_B, \label{extension u_B}\\
		\Delta_x \widetilde{\vt}&=0 \ \mbox{in } \Omega, \quad \widetilde{\vt}|_{\partial \Omega}=\vt_B; \label{extension vt_B}
	\end{align} 
	to simplify notation, we will denote the extensions $(\widetilde{\vu}, \widetilde{\vt})$ as $(\vu_B, \vt_B)$.

	\subsection{Constitutive relations} \label{Constitutive relations}
	We will suppose that the fluid is Newtonian, meaning that the viscous stress tensor $\vS$ satisfies Newton's rheological law
	\begin{equation} \label{viscosity}
		\vS(\mathbb{D}_x \vu) = 2\mu \left[\mathbb{D}_x \vu -\frac{1}{3}\Div \vu \mathbb{I}\right] + \eta\, \Div \vu \,\mathbb{I}, \quad \mu>0, \ \eta > 0,
	\end{equation}
	while the heat flux obeys Fourier's law
	\begin{equation}
		\vq(\Grad \vt) = -\kappa \Grad \vt, \quad \kappa>0.
	\end{equation}
	The viscosity coefficients $\mu, \eta$ as well as the heat conductivity coefficient $\kappa$ are constants.

	We suppose that the pressure $p=p(\vr, \vt)$ and the internal energy $e=e(\vr, \vt)$ are related through  the following relation
	\begin{equation} \label{pressure and internal energy}
		\begin{array}{c}\displaystyle \vspace{6pt}
			\frac{\partial p}{\partial \vt }(\vr, \vt) \simeq \vr \frac{\partial e}{\partial \vt }(\vr, \vt), \\ \displaystyle \vspace{6pt}
			\mbox{ i.e. there are constants } c_1, c_2>0 \mbox{ such that } c_1 \vr \, \frac{\partial e}{\partial \vt } \leq \frac{\partial p}{\partial \vt } \leq c_2 \vr \, \frac{\partial e}{\partial \vt }.
		\end{array}	
	\end{equation}
	Moreover, thermodynamic stability holds for the latter, i.e.
	\begin{equation} \label{thermodynamic stability}
		\frac{\partial p}{\partial \vr}(\vr, \vt) >0, \quad c_{\nu}(\vr, \vt):= \frac{\partial e}{\partial \vt}(\vr, \vt)>0 \quad \mbox{for } \vr, \vt>0.
	\end{equation}
	Finally, we suppose that they are related to a third quantity, the entropy $s=s(\vr, \vt)$, through Gibb's relation
	\begin{equation} \label{Gibbs relation}
		\vt Ds = De + p D \left(\frac{1}{\vr}\right) \quad \mbox{for } \vr, \vt>0
	\end{equation}
	where the symbol $D$ stands for the differentiation with respect  to the density $\vr$ and the temperature $\vt$.
	In particular, from \eqref{Gibbs relation} it is straightforward to deduce Maxwell's relation
	\begin{equation} \label{Maxwell relation} 
		\vt \frac{\partial p}{\partial \vt} = - \vr^2 \frac{\partial e}{\partial \vr} + p = - \vt \vr^2 \frac{\partial s}{\partial \vr} \quad \mbox{for } \vr, \vt>0.
	\end{equation}

The Navier-Stokes-Fourier system  with inhomogeneous boundary conditions \eqref{problem} is well-posed in the class of strong solutions on a local time interval as established by Valli and Zajaczkowski \cite{ValZaj} (see also Valli \cite{Vall2}, \cite{Vall1}). However it is still an open question whether a global solution exists or not, except certain special cases (as under the assumption of small data, see Matsumura and Nishida \cite{MatNis}, Valli and Zajaczkowski \cite{ValZaj}, or 
relaxing the concepts of solutions e.g. weak solutions or variational solutions, see \cite{ChauFei}, Feireisl and Novotn\'{y} \cite{FeiNov}).
	The aim of this study is to provide a conditional regularity criterium for system \eqref{problem} with general
constitutive equations \eqref{pressure and internal energy} suitable for the description of real materials. Besides a general equation of state as \eqref{pressure and internal energy}  we suppose only that the thermodynamic stability and the Gibb's equation hold. \\
	Recently in \cite{BasFeiMiz} the authors proved a blow-up criterion for  system \eqref{problem}  in the case that the equation of state for the pressure $p$ and the internal energy $e$ is given by the standard Boyle-Mariotte law of a perfect gas
	\begin{equation} \label{Boyle Mariotte law}
		p(\vr,\vt) = \vr\vt,  \  e(\vr, \vt) = c_{\nu} \vt, \ c_{\nu}>0.
	\end{equation}
	The specific heat at constant volume $c_{\nu}$ is a positive constant. The authors in \cite{BasFeiMiz} show that if  the solution $(\vr, \vu, \vt)$, in the class of regularity given by  Valli \cite{Vall1},  is bounded then it remains regular. In other words if the maximal existence time $ T_{\rm max}$ is finite then the $L^\infty$-norm of 	$(\vr, \vu, \vt)$ blows up when approaching $ T_{\rm max}$. Interestingly in \cite{BasFeiMiz} they consider Dirichlet boundary conditions as the majority of results concerning conditional regularity for system \eqref{problem}  were obtained on a bounded fluid domain with conservative boundary conditions, i.e. 
	$$ \vu_{|\partial\Omega} = 0, \ \Grad\vt\cdot\textbf{n}_{|\partial\Omega} = 0;$$
cf. \cite{FaJiOu}, \cite{FeWeZh}, \cite{Huang}, \cite{SunWanZha1}, \cite{WenZhu1}, \cite{WenZhu2}. As motivated in \cite{BasFeiMiz} in view of possible applications, for instance
	numerical implementations of problems of the real world,	we focus on open systems driven by inhomogeneous boundary conditions in a general thermodynamic framework. The main novelty of our work is to investigate a regularity criterion that is valid for a large class of thermodynamical models provided they are stable. We point out that relation \eqref{pressure and internal energy} describes most of the physically relevant situations, including not only the Boyle-Mariotte law \eqref{Boyle Mariotte law} but also the general equation of state considered in \cite[Section 4.1.1]{FeiNov}.
	\\
	It is remarkable that, as pointed out in \cite{BasFeiMiz},  conditional regularity results imply \emph{stability} with respect to the data given in the regularity class needed for the local-in-time existence.  \\
	The paper is organized as follows. In Section \ref{loc} we introduce the class of regular solutions to system \eqref{problem} whose local-in-time existence is proven in \cite{ValZaj}. In Section \ref{main} we state our main result concerning conditional regularity: if the strong solution is bounded, then it exists globally in time. Then the rest of the paper is devoted to the proof of the main result. In particular in Section \ref{estim} we establish the estimates involving the material time derivative of the velocity and of the temperature. Next, in Section \ref{posit} we prove the minimum principle for both the density and the temperature. We conclude employing the $L^p-L^q$ regularity to the temperature equation.

	\section{Local existence of strong solutions}\label{loc}
	
	We  are going to state the existence result established by Valli and Zajaczkowski \cite{ValZaj}; in particular, we highlight the necessary hypotheses in the following.
	\begin{itemize}
		\item[(i)] \textit{Domain.} $\Omega \subset \mathbb{R}^3$ is a bounded domain with $\partial \Omega$ of class $C^3$.
		\item[(ii)] \textit{Prescribed data.} The initial and boundary data satisfy
		\begin{align} 
			\vr_0 &\in W^{2,2}(\Omega), \quad 0< \underline{\vr}_0 \leq \vr_0 \leq \overline{\vr}_0, \label{rc1}\\
			\vu_0- \vu_B &\in W_0^{3,2}(\Omega; \mathbb{R}^3),\\
			\vt_0-\vt_B &\in W_0^{3,2}(\Omega), \quad 0< \underline{\vt}_0 \leq \vt_0 \leq \overline{\vt}_0, \label{rc3}\\
			\vu_B &\ \in W^{3,2}(\Omega; \mathbb{R}^3), \quad \vu_B\cdot \textbf{n} = 0,\\
			\vt_B & \ \in W^{3,2}(\Omega), \quad 0< \underline{\vt}_B \leq \vt_B. \label{rc6}
		\end{align} 
		The driving force $\vf = \vf(x)$ is independent of time and satisfies 
		\begin{equation} \label{rc7}
			\vf \in W^{1,2}_0(\Omega; \mathbb{R}^3).
		\end{equation}
		\item[(iii)] \textit{Compatibility condition.} We introduce the following quantities
		\begin{align}
			\dot{\vu}_0&:= \frac{1}{\vr_0} \left[ - \Grad p(\vr_0, \vt_0) + \Div \vS (\mathbb{D}_x \vu_0)\right] - (\vu_0 \cdot \Grad ) \vu_0 + \vf, \\
			\dot{\vt}_0 &:= \frac{1}{\vr_0 \ c_{\nu}(\vr_0, \vt_0)} \left[ \kappa \Delta_x \vt_0 + \vS (\mathbb{D}_x \vu_0) : \mathbb{D}_x \vu_0 - \vt_0 \frac{\partial p}{\partial \vt }(\vr_0, \vt_0) \Div \vu_0\right] - \vu_0 \cdot \Grad \vt_0,  
		\end{align}
		and we assume that 
		\begin{equation} \label{compatibility condition}
			(\dot{\vu}_0, \dot{\vt}_0) \in W^{1,2}_0(\Omega; \mathbb{R}^4).
		\end{equation}
	\end{itemize}

We set 
	\begin{equation*}
			\mathfrak{D}_0:= \max \left\{ \mu, \eta, \kappa, \| \vr_0 \|_{W^{2,2}(\Omega)}, \| (\vu_0, \vt_0, \vu_B, \vt_B)\|_{W^{3,2}(\Omega; \mathbb{R}^8)}, \ \| \textbf{\textup{f}}\|_{W^{1,2}_0(\Omega; \mathbb{R}^3)}, \frac{1}{\underline{\vr}_0}, \frac{1}{\underline{\vt}_0}, \frac{1}{\underline{\vt}_B} \right\}.
		\end{equation*}

	We are now ready to state the local existence result which can be found in \cite[Theorem 2.5]{ValZaj}. 
	
	\begin{theorem} \label{Local existence}
		Let $\Omega \subset \mathbb{R}^3$ be a bounded domain with $\partial \Omega$ of class $C^3$. Let the initial data $(\vr_0, \vu_0, \vt_0)$ the boundary data  $(\vu_B, \vt_B)$ and the external force \textbf{\textup{f}} belong to the regularity class \eqref{rc1}--\eqref{rc7} and satisfy the compatibility condition \eqref{compatibility condition}. Moreover, let the pressure $p=p(\vr, \vt)$ and the internal energy $e=e(\vr, \vt)$ be $C^2$-functions of $(\vr, \vt)$.
		
		Then there exists a positive time $T_{\rm max}$ such that the Navier--Stokes--Fourier system \eqref{problem} with the constitutive relations \eqref{viscosity}--\eqref{Gibbs relation} admits a unique solution $(\vr, \vu, \vt)$ in $[0,T_{\rm max}) \times \overline{\Omega}$ such that for any $T< T_{\rm max}$ they belong to the regularity class
		\begin{align}
			\vr & \in C([0,T]; W^{2,2}(\Omega)) \cap C^1([0,T]; W^{1,2}(\Omega)), \label{rc4}\\
			(\vu, \vt) & \in L^2(0,T; W^{3,2}(\Omega; \mathbb{R}^4 )) \cap C([0,T]; W^{2,2}(\Omega; \mathbb{R}^4)). \label{rc5}
		\end{align} 
	\end{theorem}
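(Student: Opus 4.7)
My plan is to follow the standard fixed-point approach for quasilinear hyperbolic-parabolic systems. First, I would reduce to homogeneous boundary data by writing $\vu = \vu_B + \vw$ and $\vt = \vt_B + \psi$ using the stationary extensions \eqref{extension u_B}-\eqref{extension vt_B}; this yields an equivalent system for $(\vr,\vw,\psi)$ with zero Dirichlet data and modified time-independent forcing that inherits the regularity of the extensions.

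Next comes an iteration (or, equivalently, a Banach fixed-point). Given a previous iterate $(\vr^k,\vu^k,\vt^k)$ in the regularity class \eqref{rc4}-\eqref{rc5}, I would define $(\vr^{k+1},\vu^{k+1},\vt^{k+1})$ by solving three decoupled linear problems in turn. First, the transport equation $\DerTime\vr^{k+1}+\Div(\vr^{k+1}\vu^k)=0$ by characteristics; since $\vu^k \in C([0,T];W^{2,2})\hookrightarrow C([0,T];C^{0,\alpha})$ this is well-posed, and integration along characteristics produces $\vr^{k+1}\in C([0,T];W^{2,2})\cap C^1([0,T];W^{1,2})$ together with uniform pointwise bounds $\underline{\vr}_0/2\leq\vr^{k+1}\leq 2\overline{\vr}_0$ provided $T$ is sufficiently small (controlled through $\|\Div\vu^k\|_{L^1(0,T;L^\infty)}$). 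Then, a linear parabolic system for $\vu^{k+1}$ with coefficients and source frozen from the previous iterate, and in parallel a linear parabolic equation for $\vt^{k+1}$; the $C^2$ hypothesis on $p,e$ together with the uniform bounds on $(\vr^{k+1},\vt^k)$ makes all coefficients of class $C^1$ in $(t,x)$, while the coefficient $\vr^{k+1}c_{\nu}(\vr^{k+1},\vt^k)$ in front of $\DerTime\vt^{k+1}$ is strictly positive by \eqref{thermodynamic stability}. Maximal $L^2$-regularity for second-order parabolic systems on $C^3$ domains (of Ladyzhenskaya-Solonnikov-Uraltseva type) then delivers the target regularity \eqref{rc5} for $(\vu^{k+1},\vt^{k+1})$.

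The hard part will be closing uniform a priori estimates on a time interval $[0,T]$ independent of $k$ and then proving contraction in a weaker norm. The structural difficulty is that the density satisfies only a transport equation, so no smoothing is available; the estimate for $\vr$ must close at the $W^{2,2}$-level, which requires $\nabla\vu$ in $L^1(0,T;L^\infty)$, and this in turn is obtained from $(\vu,\vt)\in L^2(0,T;W^{3,2})$ via Sobolev embedding into $W^{2,\infty}$. The compatibility condition \eqref{compatibility condition} is essential here: it guarantees that $(\DerTime\vu,\DerTime\vt)|_{t=0}\in W^{1,2}_0(\Omega;\mathbb{R}^4)$, which is exactly what is needed to propagate the $C([0,T];W^{2,2})$ regularity up to the initial time rather than only for $t>0$. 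Once uniform bounds are secured, contraction in the lower-order norm $L^\infty(0,T;L^2)\times L^2(0,T;W^{1,2})^2$ follows from a standard energy estimate on the differences for $T$ small, and passage to the limit (with an upgrade of regularity by weak-$\ast$ lower semicontinuity in the strong norms where uniform bounds hold) yields the solution. Uniqueness is then a routine energy estimate for the difference of two solutions, using \eqref{pressure and internal energy}-\eqref{Gibbs relation} to control the quasilinear terms.
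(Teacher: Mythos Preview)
The paper does not prove Theorem \ref{Local existence}; it is stated as a quotation of \cite[Theorem 2.5]{ValZaj} and no argument is given. Your outline is a faithful sketch of the classical strategy that underlies that reference (reduction to homogeneous boundary data, linearization/iteration, transport for $\vr$ solved by characteristics, maximal parabolic regularity for the $(\vu,\vt)$-block, uniform bounds on a short interval, contraction in a weaker topology), so in spirit you are reproducing the approach of Valli--Zaj\k{a}czkowski rather than anything appearing in the present paper.

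Two small cautions on your plan, should you actually carry it out. First, propagating $\vr^{k+1}\in C([0,T];W^{2,2})$ from the transport equation requires more than $\vu^k\in C([0,T];W^{2,2})$: differentiating the equation twice brings in $\nabla^2\vu^k$ multiplied by $\nabla\vr^{k+1}$, and closing a Gr\"onwall estimate at the $W^{2,2}$ level needs $\nabla\vu^k\in L^1(0,T;L^\infty)$ together with $\vu^k\in L^1(0,T;W^{3,2})$; this is available from \eqref{rc5}, but the argument is an energy estimate on the transport equation rather than ``integration along characteristics''. Second, the parabolic step for $\vt^{k+1}$ has leading coefficient $\vr^{k+1}c_\nu(\vr^{k+1},\vt^k)$, whose strict positivity requires not only \eqref{thermodynamic stability} but also a uniform lower bound on $\vt^k$; you must include a short-time minimum-principle step for $\vt^{k+1}$ (using $\underline{\vt}_0,\underline{\vt}_B>0$) inside the iteration so that this bound propagates. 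With these points addressed, your plan is the standard one and matches what the cited reference does.
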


	\section{Main result: blow-up criterion and conditional regularity}\label{main}
	
	\begin{theorem} [Blow-up criterion] \label{Blow-up Theorem}
		Let the hypotheses of Theorem \ref{Local existence} hold and let $(\vr, \vu, \vt)$ be the unique strong solution on the time-interval $[0,T_{\rm max})$, whose existence is stated in Theorem \ref{Local existence}. If $T_{\rm max}$ is  finite, then
		\begin{equation*}
			\lim_{\tau \to T_{\rm \max}^-}\| (\vr, \vu, \vt) (\tau, \cdot) \|_{L^{\infty}(\Omega; \mathbb{R}^5)} = \infty.
		\end{equation*}
	\end{theorem}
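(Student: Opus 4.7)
The plan is to argue by contradiction. Assume $T_{\max} < \infty$ and that there exists $M > 0$ with $\|(\vr, \vu, \vt)(\tau, \cdot)\|_{L^\infty(\Omega; \mathbb{R}^5)} \leq M$ for all $\tau \in [0, T_{\max})$. The goal is to promote this pointwise bound into uniform higher Sobolev regularity on $[0, T_{\max})$ matching the initial-data class \eqref{rc1}--\eqref{rc6}, together with the compatibility condition \eqref{compatibility condition} at $t = T_{\max}$; a second application of Theorem~\ref{Local existence} starting from $T_{\max}$ then extends the solution, contradicting maximality. Following the outline announced after Theorem~\ref{Local existence}, I would proceed in three stages: (i) estimates for the material derivatives $\dot{\vu} := \DerTime \vu + \vu \cdot \Grad \vu$ and $\dot{\vt} := \DerTime \vt + \vu \cdot \Grad \vt$; (ii) strict minimum principles for $\vr$ and $\vt$; (iii) an $L^p$--$L^q$ maximal regularity bootstrap.

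For stage (i), I would test the momentum equation \eqref{balance of momentum} with $\dot{\vu}$ corrected by a lift of $\vu_B$ so that the test function vanishes on $\partial \Omega$, and similarly test \eqref{balance internal energy} with a corrected $\dot{\vt}$. The pressure term is handled using the continuity equation to replace $\DerTime p$, together with Maxwell's relation \eqref{Maxwell relation} to expose the favorable cancellations. Hypothesis \eqref{pressure and internal energy} is decisive here: it trades $\partial p / \partial \vt$ against $\vr c_\nu(\vr, \vt)$ up to fixed constants, so the coupling of velocity and temperature estimates closes by Gronwall and yields, schematically,
\begin{equation*}
\sup_{\tau \in [0, T_{\max})} \int_\Omega \vr \bigl( |\dot{\vu}|^2 + c_\nu |\dot{\vt}|^2 \bigr)(\tau, \cdot) \, \dx + \int_0^{T_{\max}} \int_\Omega \bigl( |\Grad \dot{\vu}|^2 + |\Grad \dot{\vt}|^2 \bigr) \, \dx \, \dt \leq C.
\end{equation*}
Elliptic regularity applied to the stationary momentum equation, with $\vr \dot{\vu}$, $\Grad p$ and the boundary lift as sources, upgrades these into bounds on $\Grad^2 \vu$ in mixed Lebesgue spaces, and in particular places $\Div \vu$ in $L^1(0, T_{\max}; L^\infty(\Omega))$.

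For stage (ii), integration of \eqref{continuity equation} along the flow of $\vu$ then gives a uniform lower bound $\vr \geq \underline{\vr} > 0$. For the temperature, I would rewrite \eqref{balance internal energy} in the parabolic form
\begin{equation*}
\vr\, c_\nu(\vr,\vt) \bigl( \DerTime \vt + \vu \cdot \Grad \vt \bigr) - \kappa \Delta_x \vt = \vS(\mathbb{D}_x \vu) : \mathbb{D}_x \vu - \vt \frac{\partial p}{\partial \vt} \Div \vu,
\end{equation*}
and apply a weak parabolic minimum principle by testing with $(\vt - \delta)^-$ for $\delta < \min\{\underline{\vt}_0, \underline{\vt}_B\}$: the viscous dissipation has a favorable sign, and the pressure-divergence term is controlled using \eqref{pressure and internal energy} and the bound already obtained on $\Div \vu$, producing $\vt \geq \underline{\vt} > 0$. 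With $\vr$ and $\vt$ bounded both above and away from zero, stage (iii) is essentially linear: the parabolic equation for $\vt$, viewed as linear with bounded measurable coefficients and $L^p$ source, and the momentum equation, viewed as a Lam\'e system with $L^p$ source, both admit $L^p$--$L^q$ maximal regularity with inhomogeneous Dirichlet data in the spirit of Valli \cite{Vall1}, producing $(\vu, \vt) \in C([0, T_{\max}]; W^{2,2}) \cap L^2(0, T_{\max}; W^{3,2})$. The transport structure of \eqref{continuity equation} then preserves $\vr \in C([0, T_{\max}]; W^{2,2}) \cap C^1([0, T_{\max}]; W^{1,2})$, and the compatibility condition at $t = T_{\max}$ passes by continuity.

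The main obstacle is the general equation of state. In the Boyle--Mariotte case treated in \cite{BasFeiMiz}, the explicit identities $p = \vr \vt$ and $e = c_\nu \vt$ supply direct algebraic cancellations in the $\dot{\vu}$ and $\dot{\vt}$ estimates that are simply not available here. The technical heart of the proof is to reproduce these cancellations in a weaker, quantitative form using only Gibbs' relation \eqref{Gibbs relation}, Maxwell's relation \eqref{Maxwell relation}, thermodynamic stability \eqref{thermodynamic stability}, and the two-sided comparison \eqref{pressure and internal energy}; ensuring that the resulting error terms are absorbed by the dissipative norms, uniformly on $[0, T_{\max})$ and under the sole $L^\infty$ bound on $(\vr, \vu, \vt)$, is where most of the work goes.
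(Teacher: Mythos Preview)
Your proposal is correct and follows essentially the same three-stage strategy as the paper: material-derivative energy estimates closed by Gr\"onwall (with the elliptic decomposition of the momentum equation to control $\|\Grad\vu\|_{L^4}^4$), then positivity of $\vr$ and $\vt$, then an $L^p$--$L^q$ parabolic bootstrap. One small discrepancy: your schematic estimate promises $\sup_\tau \int_\Omega \vr c_\nu |\dot\vt|^2$ and $\int_0^{T_{\max}}\!\int_\Omega |\Grad\dot\vt|^2$, but the paper (testing \eqref{balance internal energy} only with $\DerTime(\vt-\vt_B)$, not with a further material derivative) obtains merely $\int_0^{T}\!\int_\Omega \vr c_\nu |D_t(\vt-\vt_B)|^2$ and no $\Grad\dot\vt$ control at this stage---which still suffices for the remainder of the argument.
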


	To get Theorem \ref{Blow-up Theorem}, we prove its contrapositive.
	
	\begin{theorem} [Conditional regularity] \label{Conditional regularity}
		Under the hypotheses of Theorem \ref{Blow-up Theorem}, let $(\vr, \vu, \vt)$ be the unique strong solution on the time-interval $[0,T_{\rm max})$, whose existence is stated in Theorem \ref{Local existence}. Moreover, let us suppose that for any $T < T_{\rm max}$ 
		\begin{equation} \label{boundedness}
	\begin{split}		
			&\sup_{(\tau,x) \in [0,T] \times \overline{\Omega}}\vr(\tau,x) \leq \overline{\vr}, \ 	\sup_{(\tau,x) \in [0,T] \times \overline{\Omega}} |\vu(\tau,x)| \leq \overline{u}, \ \sup_{(\tau,x) \in [0,T] \times \overline{\Omega}}\vt(\tau,x) \leq   \overline{\vt}.
	\end{split}	\end{equation}
		Then there exist positive constants 
		\begin{equation*}
			(\underline{\vr}, \underline{\vt}, C_0) = (\underline{\vr}, \underline{\vt}, C_0)(T_{\rm \max};  \overline{\vr}, \overline{u}, \overline{\vt}; \mathfrak{D}_0)>0, 
		\end{equation*}
		such that for any $T< T_{\rm max}$
		\begin{equation} \label{bound from below density and temperature}
			\inf_{(\tau,x) \in [0,T]\times \overline{\Omega}}  \vr(\tau, x) \geq \underline{\vr}, \ \inf_{(\tau,x) \in [0,T]\times  \overline{\Omega}}  \vt(\tau, x)  \geq \underline{\vt},
		\end{equation}
		\begin{equation} \label{final estimate}
				\sup_{\tau\in [0,T]} \left\{ \| \vr(\tau, \cdot) \|_{W^{2,2}(\Omega)}, \| (\vu-\vu_B, \vt-\vt_B)(\tau, \cdot)\|_{W^{3,2}_0(\Omega; \mathbb{R}^4)},  \| (\DerTime \vu, \DerTime \vt)(\tau, \cdot)\|_{W^{1,2}_0(\Omega; \mathbb{R}^4)} \right\}\leq C_0.
		\end{equation}
	\end{theorem}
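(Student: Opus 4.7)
The overall strategy is to establish the bounds \eqref{bound from below density and temperature}--\eqref{final estimate} as \emph{a priori} estimates under the standing hypothesis \eqref{boundedness}, so that by Theorem \ref{Local existence} the strong solution can be continued past $T_{\max}$, contradicting its maximality. I would organize the bootstrap into three blocks, matching the roadmap announced by the authors: (i) estimates on the material derivatives $\dot\vu:=\DerTime\vu+(\vu\cdot\Grad)\vu$ and $\dot\vt:=\DerTime\vt+\vu\cdot\Grad\vt$; (ii) a minimum principle for both $\vr$ and $\vt$; (iii) a parabolic bootstrap recovering the full $W^{3,2}$-regularity of $(\vu-\vu_B,\vt-\vt_B)$ and the $W^{2,2}$-regularity of $\vr$.

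For block (i), I would first subtract the extensions $(\vu_B,\vt_B)$ from $(\vu,\vt)$ so that $\dot\vu$ and $\dot\vt$ vanish on $\partial\Omega$ up to lower-order time-independent corrections. Testing \eqref{balance of momentum} against $\dot\vu$ produces the favorable term $\tfrac12\DerTime\int_\Omega\vS(\mathbb{D}_x\vu):\mathbb{D}_x\vu\,\dx$ together with the awkward pressure contribution $\int_\Omega\Grad p(\vr,\vt)\cdot\dot\vu\,\dx$; writing $\Grad p=\partial_\vr p\,\Grad\vr+\partial_\vt p\,\Grad\vt$, integrating by parts, and using \eqref{continuity equation} together with Maxwell's identity \eqref{Maxwell relation} rearranges this into a time derivative plus a remainder proportional to $\partial_\vt p\,\Div\vu\,\dot\vt$. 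A parallel test of \eqref{balance internal energy} against $\dot\vt$, combined with \eqref{pressure and internal energy} and Gibbs' relation \eqref{Gibbs relation}, furnishes cross terms that match and absorb that remainder. Under \eqref{boundedness} a Gr\"onwall inequality then yields $(\dot\vu,\dot\vt)\in L^\infty(0,T;L^2)$ and $(\Grad\dot\vu,\Grad\dot\vt)\in L^2(0,T;L^2)$.

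For block (ii), the lower bound on $\vr$ follows by integrating $\DerTime\ln\vr+\vu\cdot\Grad\ln\vr=-\Div\vu$ along the Lagrangian flow of $\vu$, once $\int_0^T\|\Div\vu\|_{L^\infty}\,\dt$ is under control; this integral is itself bounded by $L^p$-elliptic regularity applied to $\Div\vS(\mathbb{D}_x\vu)=\vr\dot\vu+\Grad p-\vr\vf$, using the bound on $\sqrt\vr\,\dot\vu$ from block (i). The lower bound on $\vt$ is obtained from the non-divergence form $\vr c_\nu(\DerTime\vt+\vu\cdot\Grad\vt)-\kappa\Delta_x\vt=\vS(\mathbb{D}_x\vu):\mathbb{D}_x\vu-\vt\,\partial_\vt p\,\Div\vu$: the dissipation on the right is nonnegative, and the linear sink $\vt\,\partial_\vt p\,\Div\vu$ is absorbed through a Stampacchia-type comparison with $\min\{\underline\vt_0,\underline\vt_B\}\exp\bigl(-C\int_0^T\|\Div\vu\|_{L^\infty}\dt\bigr)$, the exponent being finite by the previous step.

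For block (iii), with $\vr$ and $\vt$ now bounded away from zero, the temperature equation becomes uniformly parabolic with coefficients bounded above and below, and $L^p$--$L^q$ maximal regularity promotes $\vt-\vt_B$ to the $W^{2,p}_x$-level. Plugging this back, the momentum equation viewed as an elliptic Lam\'e system for $\vu-\vu_B$ with right-hand side $\vr\dot\vu+\Grad p-\vr\vf$ yields the $W^{3,2}$-bound on $\vu-\vu_B$; differentiating \eqref{continuity equation} twice and using its transport structure gives the $W^{2,2}$-bound on $\vr$; and the $W^{1,2}_0$-bound on $(\DerTime\vu,\DerTime\vt)$ follows from the material-derivative estimates once the convective corrections $-(\vu\cdot\Grad)\vu$ and $-\vu\cdot\Grad\vt$ are controlled by the just-obtained spatial regularity. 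The hardest point, compared with the Boyle--Mariotte case treated in \cite{BasFeiMiz}, is the closure of block (i): without the explicit structure $p=\vr\vt$ the cross terms between $\dot\vu$ and $\dot\vt$ have no explicit cancellations, and must instead be reorganized purely through the structural inputs \eqref{pressure and internal energy}, \eqref{Gibbs relation} and \eqref{Maxwell relation}, relying on the differential identities between derivatives of $p$, $e$ and $s$ rather than on closed-form expressions.
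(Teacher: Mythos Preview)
Your three-block outline matches the paper's strategy, and your identification of \eqref{pressure and internal energy} and \eqref{Maxwell relation} as the structural replacements for the explicit Boyle--Mariotte cancellations is correct. However, two steps in your sketch do not close as written, and both correspond to nontrivial ingredients that the paper imports from \cite{SunWanZha1}.

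First, in block (i) you assert that a Gr\"onwall argument delivers the material-derivative bounds once the velocity and temperature tests are combined. But when you test \eqref{balance of momentum} against $D_t(\vu-\vu_B)$, apply $D_t$ to \eqref{balance of momentum} and test again, and test \eqref{balance internal energy} against $D_t(\vt-\vt_B)$, the resulting inequality carries $\int_\Omega|\Grad(\vu-\vu_B)|^4\,\dx$ on the right (from the viscous commutators and the dissipation $\vS:\mathbb{D}_x\vu$), and this is \emph{not} dominated by anything on the left. The paper closes the loop via the decomposition $\vu-\vu_B=\vv+\vw$ with $\Div\vS(\mathbb{D}_x\vv)=\Grad p$, $\Div\vS(\mathbb{D}_x\vw)=\vr D_t(\vu-\vu_B)-\vr\vf+\vr(\vu\cdot\Grad\vu_B)$: boundedness of $p$ yields $\vv\in W^{1,q}$ for every $q$, and Gagliardo--Nirenberg on $\vw$ (using $\|\vw\|_{L^\infty}\leq\overline{u}+\|\vv\|_{L^\infty}$) converts $\|\Grad\vw\|_{L^4}^4$ into $\|\sqrt\vr\,D_t(\vu-\vu_B)\|_{L^2}^2$, which \emph{is} on the left. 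Without this step the Gr\"onwall is open. Relatedly, your block (i) overclaims $\dot\vt\in L^\infty L^2$ and $\Grad\dot\vt\in L^2L^2$; the paper only obtains $\sqrt{\vr c_\nu}\,\dot\vt\in L^2L^2$ here, and the stronger claim would require differentiating the temperature equation, producing $\vS(\mathbb{D}_x\dot\vu):\mathbb{D}_x\vu$ which is uncontrollable before $\Grad\vu\in L^\infty$ is known.

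Second, in block (ii) you derive $\int_0^T\|\Div\vu\|_{L^\infty}\,\dt<\infty$ by ``$L^p$-elliptic regularity'' for the Lam\'e system using only the block-(i) bound on $\sqrt\vr\,\dot\vu$. But the Lam\'e right-hand side also contains $\Grad p=\partial_\vr p\,\Grad\vr+\partial_\vt p\,\Grad\vt$, and placing $\Div\vu$ into $L^1(0,T;L^\infty)$ requires $\Grad\vr\in L^\infty(0,T;L^6)$, which block (i) does not supply. The paper inserts an intermediate layer \emph{before} the lower bounds: parabolic regularity for \eqref{balance internal energy} gives $\vt-\vt_B\in L^2(0,T;W^{2,2})$; then a separate transport argument (via \cite[Section~5]{SunWanZha1}, exploiting the $\vw$-part of the decomposition above and the bound $\Grad D_t(\vu-\vu_B)\in L^2L^2$) yields $\Grad\vr\in L^\infty(0,T;L^6)$; only then does Lam\'e regularity give $\vu-\vu_B\in L^2(0,T;W^{2,6})$ and hence $\Div\vu\in L^1(0,T;L^\infty)$. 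Your route to the density lower bound, as stated, is circular.
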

	
	\begin{remark}
		Proceeding as in \cite[Theorem 1.2]{FaZiZh}, if $T_{\rm max}$ is finite, estimates \eqref{bound from below density and temperature} and \eqref{final estimate}  in particular imply that 
		\begin{equation*}
			(\vr, \vu, \vt) (T_{\rm max}, \cdot) = \lim_{\tau \to T_{\rm \max}^-} 	(\vr, \vu, \vt) (\tau, \cdot) 
		\end{equation*}
		can be taken as the new initial data to problem \eqref{continuity equation}--\eqref{balance internal energy} and therefore, that the solution $(\vr, \vu, \vt)$ can be extended.
	\end{remark}

	\section{Estimates}\label{estim}
	
	Our goal in this section is to derive \textit{a priori} bounds in higher order norms depending solely on $T_{\rm max}$, the constants $(\overline{\vr}, \overline{u}, \overline{\vt})$ and the prescribed data but independent of the choice $T< T_{\rm max}$; we take inspiration from
	 the arguments performed in \cite[Section 3]{FaZiZh}, \cite[Section 4]{BasFeiMiz} although here we consider a very general thermodynamical setting.
	
	We point out that, since the pressure $p$ is a $C^2$-function of $(\vr,\vt)$, it is in particular bounded on bounded sets of $\mathbb{R}^2$, along with its first-order derivatives. Therefore, we deduce that
	\begin{equation} \label{boundedness thermodynamic quantitites}
		\sup_{(\vr, \vt) \in [0,\overline{\vr}] \times [0, \overline{\vt}]} \left\{ p(\vr, \vt), \ \frac{\partial p}{\partial \vr} (\vr, \vt), \ \frac{\partial p}{\partial \vt} (\vr, \vt) \right\} \leq C(\overline{\vr}, \overline{\vt}).
	\end{equation}
	
	To simplify the notation, we 
	deduce the necessary velocity and temperature estimates in terms of the \textit{material derivative}, that is defined for any given function $g$ as
	\begin{equation*}
		D_t g := \DerTime g + \vu \cdot \Grad g.
	\end{equation*}

	\subsection{Velocity estimates}
	
	\begin{lemma}[Velocity estimates, part I]
		Under the hypotheses of Theorem \ref{Conditional regularity}, there holds
		\begin{equation} \label{estimate 1}
			\begin{split}
				&\frac{\textup{d}}{\dt} \int_{\Omega} |\Grad (\vu-\vu_B)|^2 \ \dx + \int_{\Omega} \vr |D_t(\vu-\vu_B)|^2 \ \dx \\
				&\leq C(\overline{\vr}, \overline{u}, \overline{\vt}; \mathfrak{D}_0) \left(1+ \int_{\Omega} \sqrt{\vr c_{\nu}(\vr, \vt)}|D_t (\vt- \vt_B)| |\Grad(\vu-\vu_B)| \ \dx + \int_{\Omega} |\Grad (\vu-\vu_B)|^4 \ \dx   \right).
			\end{split}
		\end{equation}
	\end{lemma}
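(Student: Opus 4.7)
The plan is to test the momentum balance \eqref{balance of momentum} against the material derivative $D_t\vw$, where $\vw := \vu - \vu_B$. Since $\vw$ vanishes on $\partial\Omega$ for all times and $\vu$ is tangent to $\partial\Omega$, tangential derivatives of $\vw$ along the boundary vanish, whence $D_t\vw|_{\partial\Omega}=0$ and every spatial integration by parts is boundary-free. Exploiting $\Div\vS(\mathbb{D}_x\vu_B)=0$ from \eqref{extension u_B} and the linearity of $\vS$, together with $D_t\vu = D_t\vw + \vu\cdot\Grad\vu_B$ (since $\DerTime\vu_B=0$), I would first recast \eqref{balance of momentum} as
\[
\vr\,D_t\vw \;=\; \Div\vS(\mathbb{D}_x\vw) - \Grad p - \vr(\vu\cdot\Grad\vu_B) + \vr\vf,
\]
then multiply by $D_t\vw$, integrate over $\Omega$, and estimate the four integrals on the right one by one.

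For the viscous integral I would integrate by parts and use the commutator identity $\partial_j(D_t w_i) = D_t(\partial_j w_i) + \partial_j u_k\,\partial_k w_i$ to write
\[
-\int_\Omega D_t\vw\cdot\Div\vS(\mathbb{D}_x\vw)\,\dx \;=\; \int_\Omega \vS(\mathbb{D}_x\vw):D_t\Grad\vw\,\dx + \int_\Omega \vS(\mathbb{D}_x\vw):(\Grad\vw\cdot\Grad\vu)\,\dx.
\]
Symmetry and linearity of $\vS$ render the first integrand equal to $\tfrac12 D_t\bigl[\vS(\mathbb{D}_x\vw):\mathbb{D}_x\vw\bigr]$; splitting $D_t=\DerTime+\vu\cdot\Grad$ and integrating the transport piece by parts (using $\vu\cdot\textbf{n}=0$) produces $\tfrac12\frac{d}{dt}\int\vS(\mathbb{D}_x\vw):\mathbb{D}_x\vw$ plus a cubic remainder $\tfrac12\int(\Div\vu)\,\vS(\mathbb{D}_x\vw):\mathbb{D}_x\vw$. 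Writing $\Grad\vu=\Grad\vw+\Grad\vu_B$ with $\|\Grad\vu_B\|_\infty\le C(\mathfrak{D}_0)$ (Sobolev embedding $W^{3,2}\hookrightarrow C^1$ in three dimensions), both this remainder and the commutator integral are controlled by $\int|\Grad\vw|^4+C$ via Young's inequality. The pointwise identity $\int\vS(\mathbb{D}_x\vw):\mathbb{D}_x\vw = \mu\int|\Grad\vw|^2 + (\eta+\mu/3)\int(\Div\vw)^2$, obtained by integration by parts using $\vw|_{\partial\Omega}=0$, then connects the gained time derivative with $\frac{d}{dt}\int|\Grad\vw|^2$, the extra $\frac{d}{dt}\int(\Div\vw)^2$ being a non-negative contribution that may be kept on the left.

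For the pressure integral, integration by parts (with zero boundary term) yields $\int p\,\Div(D_t\vw)\,\dx$. Using $\Div(D_t\vw) = D_t(\Div\vw) + \Grad\vw:(\Grad\vu)^\top$, integrating the $\DerTime$ and $\vu\cdot\Grad$ pieces by parts, and substituting
\[
D_t p \;=\; \tfrac{\partial p}{\partial\vr}D_t\vr + \tfrac{\partial p}{\partial\vt}D_t\vt \;=\; -\vr\tfrac{\partial p}{\partial\vr}\Div\vu + \tfrac{\partial p}{\partial\vt}\bigl[D_t(\vt-\vt_B)+\vu\cdot\Grad\vt_B\bigr]
\]
(via \eqref{continuity equation} and $\DerTime\vt_B=0$), every term other than the one with $D_t(\vt-\vt_B)$ becomes polynomial in the bounded quantities $\vr,\vt,p,\tfrac{\partial p}{\partial\vr},\tfrac{\partial p}{\partial\vt},\Grad\vu_B,\Grad\vt_B$ and $\Grad\vw$, and is thus absorbed into $C(1+\int|\Grad\vw|^4)$. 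The essential new contribution $-\int\tfrac{\partial p}{\partial\vt}D_t(\vt-\vt_B)\Div\vw$ is handled via the structural assumption \eqref{pressure and internal energy}, which gives $\tfrac{\partial p}{\partial\vt}\le c_2\vr c_\nu$ and hence the bound $C(\overline{\vr},\overline{\vt})\int\sqrt{\vr c_\nu}\,|D_t(\vt-\vt_B)|\,|\Grad\vw|$ appearing in \eqref{estimate 1}.

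The force term and the correction $\int\vr(\vu\cdot\Grad\vu_B)\cdot D_t\vw$ are dispatched by Cauchy--Schwarz and Young to give $\epsilon\int\vr|D_t\vw|^2 + C(\overline{\vr},\overline{u},\mathfrak{D}_0)$, the $\epsilon$-part being absorbed on the left. Assembling everything and invoking Korn's inequality on $W_0^{1,2}$ where needed yields \eqref{estimate 1}. The main obstacle I anticipate is the careful bookkeeping of the $[D_t,\Grad]$ and $[D_t,\Div]$ commutators in the viscous and pressure terms: their cubic-in-$\Grad\vw$ remainders must fit inside the stated $\int|\Grad\vw|^4$ term, and the time derivative $\frac{d}{dt}\int p\,\Div\vw$ that the pressure analysis naturally produces must be rearranged consistently with the $\frac{d}{dt}\int|\Grad\vw|^2$ appearing on the left-hand side.
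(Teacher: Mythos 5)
Your proposal follows essentially the same route as the paper: test the momentum balance against $D_t(\vu-\vu_B)$, use $D_t(\vu-\vu_B)|_{\partial\Omega}=0$ to integrate by parts freely, extract $\tfrac12\tfrac{\textup{d}}{\dt}\int\vS(\mathbb{D}_x(\vu-\vu_B)):\mathbb{D}_x(\vu-\vu_B)$ from the viscous term modulo commutator remainders absorbed into $\int|\Grad(\vu-\vu_B)|^4$, and handle the pressure via $\int p\,\Div D_t(\vu-\vu_B)$, the continuity equation, and the structural bound $\partial p/\partial\vt\simeq\vr c_\nu$ from \eqref{pressure and internal energy} to produce the $\sqrt{\vr c_\nu}\,|D_t(\vt-\vt_B)|\,|\Grad(\vu-\vu_B)|$ term. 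The argument is correct and matches the paper's proof in all essential steps.
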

	\begin{proof}
		Proceeding as in \cite[Section 4.1]{BasFeiMiz}, we take the scalar product of the balance of momentum  \eqref{balance of momentum} with $\vu-\vu_B$ and integrate the resulting expression over $\Omega$, obtaining 
		\begin{equation} \label{e1}
			\begin{aligned}
				\int_{\Omega} &\vr |D_t (\vu-\vu_B) |^2 \ \dx - \int_{\Omega} \Div  \vS \big(\mathbb{D}_x (\vu-\vu_B)\big) \cdot D_t (\vu-\vu_B) \ \dx\\
				&=- \int_{\Omega} \Grad p \cdot D_t (\vu-\vu_B) \ \dx  + \int_{\Omega} \vr \vf \cdot D_t (\vu-\vu_B) \ \dx + \int_{\Omega} \vr D_t \vu_B \cdot D_t (\vu-\vu_B) \ \dx.
			\end{aligned}
		\end{equation}
		Notice in particular that $\Grad (\vu-\vu_B)|_{\partial \Omega} \simeq \textbf{n}$ and hence, using the fact $\vu_B$ is independent of time and tangential to the boundary, we have that
		\begin{equation} \label{boun cond v}
			D_t (\vu-\vu_B)|_{\partial \Omega} = \big[ \DerTime \vu + \vu \cdot \Grad(\vu-\vu_B)\big]|_{\partial \Omega} \simeq \vu_B \cdot \textbf{n} |_{\partial \Omega} =0.
		\end{equation}
		Noticing that 
		\begin{equation*}
			\vS \big(\mathbb{D}_x (\vu-\vu_B)\big): \Grad \DerTime (\vu-\vu_B) = \frac{1}{2} \DerTime \left[ \vS \big(\mathbb{D}_x (\vu-\vu_B)\big): \mathbb{D}_x (\vu-\vu_B) \right],
		\end{equation*}
		we can write
		\begin{equation} \label{e2}
			\begin{aligned}
				- &\int_{\Omega} \Div  \vS \big(\mathbb{D}_x (\vu-\vu_B)\big) \cdot D_t (\vu-\vu_B) \ \dx \\
				&= \frac{1}{2} \frac{\textup{d}}{\dt} \int_{\Omega} \vS \big(\mathbb{D}_x (\vu-\vu_B)\big): \mathbb{D}_x (\vu-\vu_B) \ \dx \\ 
				& + \mu \left( \int_{\Omega} \Grad (\vu-\vu_B) : \big[\Grad (\vu-\vu_B) \Grad \vu\big] \ \dx - \frac{1}{2} \int_{\Omega} |\Grad (\vu-\vu_B)|^2 \Div \vu \ \dx \right)\\
				& + \left(\eta + \frac{\mu}{3}\right)  \left(\int_{\Omega} \Div (\vu-\vu_B) \ \Grad \vu : \Grad^{\top} (\vu-\vu_B) \ \dx - \frac{1}{2}  \int_{\Omega} \big[\Div (\vu-\vu_B)\big]^2 \Div \vu \ \dx \right)
			\end{aligned}
		\end{equation}
		Furthermore, 
		\begin{equation*}
			-\int_{\Omega} \Grad p \cdot D_t (\vu-\vu_B) \ \dx = \int_{\Omega} p \ \Div D_t (\vu-\vu_B) \ \dx 
		\end{equation*}
		with 
		\begin{align*}
			p \  \Div D_t (\vu-\vu_B) = \DerTime \big[p \Div (\vu-\vu_B)\big] &- \big[ \DerTime p + \Div (p \vu)\big] \Div (\vu-\vu_B) \\
			&+p \ \Grad \vu : \Grad^{\top} (\vu-\vu_B) + \Div \big[p\vu \ \Div(\vu-\vu_B)\big];
		\end{align*}
		from the continuity equation \eqref{continuity equation} and relation \eqref{pressure and internal energy}, we can deduce that 
		\begin{equation*}
			\DerTime p + \Div (p\vu) \geq c_1 \vr c_{\nu } D_t \vt + \left(p- \vr \frac{\partial p}{\partial \vr }\right) \Div \vu.
		\end{equation*}
		Consequently,
		\begin{equation} \label{e3}
			\begin{aligned}
				-\int_{\Omega} &\Grad p \cdot D_t (\vu-\vu_B) \ \dx \\
				&\leq \frac{\textup{d}}{\dt} \int_{\Omega} p \ \Div (\vu-\vu_B) \ \dx - c_1\int_{\Omega} \vr c_{\nu}\  D_t \vt \  \Div (\vu-\vu_B) \ \dx \\
				&- \int_{\Omega} \left(p- \vr \frac{\partial p}{\partial \vr }\right) \Div \vu \  \Div (\vu-\vu_B) \ \dx + \int_{\Omega} p \  \Grad \vu : \Grad^{\top} (\vu-\vu_B) \ \dx.
			\end{aligned}
		\end{equation}
		Substituting \eqref{e2}, \eqref{e3} into \eqref{e1}, from \eqref{boundedness thermodynamic quantitites} and Young's inequality we obtain that  the following integral inequality
		\begin{equation*}
			\begin{aligned}
				\frac{1}{2} \frac{\textup{d}}{\dt} &\int_{\Omega} \vS \big(\mathbb{D}_x (\vu-\vu_B)\big): \mathbb{D}_x (\vu-\vu_B)  \ \dx + \int_{\Omega} \vr |D_t (\vu-\vu_B)|^2 \ \dx \\
				&\leq \ve_1 \frac{\textup{d}}{\dt} \int_{\Omega} |\Grad (\vu-\vu_B)|^2 \ \dx + \ve_2 \int_{\Omega} \vr |D_t (\vu-\vu_B)|^2 \ \dx \\
				&+ C(\overline{\vr}) \left( 1+ \int_{\Omega}\sqrt{\vr c_{\nu}(\vr, \vt)}  |D_t \vt|  |\Grad (\vu-\vu_B)| \ \dx + \int_{\Omega} |\Grad (\vu-\vu_B)|^4 \ \dx  \right) \\
				&+ C(\ve_1, \ve_2; \overline{\vr}, \overline{u}, \overline{\vt}) \left( 1+ \| \vu_B\|_{W^{1,2}(\Omega; \mathbb{R}^3)}^2+ \| \vf \|_{W^{1,2}(\Omega; \mathbb{R}^3)}^2  \right)
			\end{aligned}
		\end{equation*}
		holds for any $\ve_1, \ve_2>0$. Choosing $\ve_1, \ve_2$ small enough so that the first two terms on the left-hand side are absorbed by the right-hand side, we obtain \eqref{estimate 1}.
	\end{proof}

	\begin{lemma} [Velocity estimates, part II]
		Under the hypotheses of Theorem \ref{Conditional regularity}, there holds
		\begin{equation} \label{estimate 2}
			\begin{aligned}
				\frac{\textup{d}}{\dt} &\int_{\Omega} \vr |D_t (\vu-\vu_B)|^2 \ \dx + \int_{\Omega} | \Grad D_t (\vu-\vu_B)|^2 \ \dx  \\
				&\leq C(\overline{\vr}, \overline{u}, \overline{\vt}; \mathfrak{D}_0) \left(1+ \int_{\Omega} \vr c_{\nu}(\vr, \vt)|D_t (\vt- \vt_B)|^2 \ \dx + \int_{\Omega} |\Grad (\vu-\vu_B)|^4 \ \dx   \right).
			\end{aligned}
		\end{equation}
	\end{lemma}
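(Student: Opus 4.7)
\emph{Plan.} I follow the Hoff-type energy strategy of \cite[Section 4.1]{BasFeiMiz} and of the preceding lemma. Setting $\vw:=\vu-\vu_B$, the momentum equation \eqref{balance of momentum} rewrites as $\vr D_t\vw = -\Grad p + \Div\vS(\mathbb{D}_x\vu) + \vr\vf - \vr D_t\vu_B$. Using the continuity equation together with $\vu\cdot\textbf{n}|_{\partial\Omega}=0$, a direct computation yields
$$\frac{1}{2}\frac{\textup{d}}{\dt}\int_\Omega \vr|D_t\vw|^2\,\dx = \int_\Omega \vr D_t^2\vw\cdot D_t\vw\,\dx,$$
while $D_t\vr = -\vr\Div\vu$ gives
$$\vr D_t^2\vw = D_t\bigl(-\Grad p + \Div\vS(\mathbb{D}_x\vu) + \vr\vf - \vr D_t\vu_B\bigr) + \vr\Div\vu\,D_t\vw.$$
The key boundary identity $D_t\vw|_{\partial\Omega}=0$, established in \eqref{boun cond v} during the proof of \eqref{estimate 1}, makes every subsequent integration by parts boundary-free.

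\emph{Viscous and pressure contributions.} I swap $D_t$ with spatial derivatives via the commutator $[D_t,\partial_j]f = -(\partial_j u_k)\partial_k f$. Integration by parts on the viscous term produces the principal piece $-\int_\Omega\vS(\mathbb{D}_x D_t\vw):\mathbb{D}_x D_t\vw\,\dx$, which by Korn's inequality provides the good term $-c\int|\Grad D_t\vw|^2$; the commutator remainders are cubic expressions of the schematic form $(\Grad\vu)(\Grad\vu)\Grad D_t\vw$ and $(\Grad\vu_B)\Grad D_t\vw$, controllable by Young's inequality as $\varepsilon\int|\Grad D_t\vw|^2 + C\int|\Grad\vw|^4 + C(\mathfrak{D}_0)$. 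For the pressure term, integration by parts leaves as principal piece $\int_\Omega D_t p\,\Div D_t\vw\,\dx$; then the identity
$$D_t p = \tfrac{\partial p}{\partial \vr}D_t\vr + \tfrac{\partial p}{\partial\vt}D_t\vt = -\vr\tfrac{\partial p}{\partial\vr}\Div\vu + \tfrac{\partial p}{\partial\vt}\bigl(D_t(\vt-\vt_B) + \vu\cdot\Grad\vt_B\bigr),$$
together with $\tfrac{\partial p}{\partial\vt}\leq c_2\vr c_\nu$ from \eqref{pressure and internal energy} and the pointwise bound \eqref{boundedness thermodynamic quantitites}, yields after Young's inequality the coupling term $C(\overline{\vr},\overline{\vt})\int\vr c_\nu(\vr,\vt)|D_t(\vt-\vt_B)|^2\,\dx$ on the right, plus terms already of the types listed in \eqref{estimate 2}.

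\emph{Remaining terms and conclusion.} The remaining contributions $\int D_t(\vr\vf)\cdot D_t\vw$, $-\int D_t(\vr D_t\vu_B)\cdot D_t\vw$, and the extra piece $\int\vr\Div\vu|D_t\vw|^2$ are handled via \eqref{boundedness}, the regularity of $(\vf,\vu_B)$ encoded in $\mathfrak{D}_0$, and the Sobolev embedding $W^{1,2}_0(\Omega)\hookrightarrow L^6(\Omega)$; for instance $\int\vr\Div\vu|D_t\vw|^2 \leq \|\vr\|_\infty\|\Grad\vu\|_{L^2}\|D_t\vw\|_{L^2}\|\Grad D_t\vw\|_{L^2}$, finished by Young. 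Summing and absorbing the $\varepsilon\int|\Grad D_t\vw|^2$-pieces into the left-hand side produces \eqref{estimate 2}. The delicate part is the bookkeeping: every commutator must be forced into either the good term $\int|\Grad D_t\vw|^2$, the coupling term $\int\vr c_\nu|D_t(\vt-\vt_B)|^2$, or the fourth power $\int|\Grad\vw|^4$, with no worse remainder left over. A further subtlety specific to the general constitutive framework is that \eqref{pressure and internal energy} only provides $\vr c_\nu\lesssim \tfrac{\partial p}{\partial\vt}$; the weight $\vr c_\nu$ appearing on the right of \eqref{estimate 2} is therefore uniformly bounded only thanks to \eqref{boundedness thermodynamic quantitites}, which in turn requires $p\in C^2$.
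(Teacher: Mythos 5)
Your overall strategy coincides with the paper's: apply the material derivative to \eqref{balance of momentum}, test with $D_t(\vu-\vu_B)$, exploit $D_t(\vu-\vu_B)|_{\partial\Omega}=0$ to integrate by parts without boundary terms, and convert $\frac{\partial p}{\partial\vt}D_t\vt$ into the coupling term via \eqref{pressure and internal energy} and \eqref{boundedness thermodynamic quantitites}. However, one step fails as written. By splitting $\vr D_t^2(\vu-\vu_B)=D_tF+\vr\,\Div\vu\,D_t(\vu-\vu_B)$ with $F=-\Grad p+\Div\vS(\mathbb{D}_x\vu)+\vr\vf-\vr D_t\vu_B$, you isolate the term $\int_\Omega\vr\,\Div\vu\,|D_t(\vu-\vu_B)|^2\,\dx$ and propose the bound $\|\vr\|_\infty\|\Grad\vu\|_{L^2}\|D_t(\vu-\vu_B)\|_{L^2}\|\Grad D_t(\vu-\vu_B)\|_{L^2}$. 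That is the two-dimensional Ladyzhenskaya inequality; in $\Omega\subset\mathbb{R}^3$ the correct Gagliardo--Nirenberg exponents give $\|\Grad\vu\|_{L^2}\,\|D_t(\vu-\vu_B)\|_{L^2}^{1/2}\,\|\Grad D_t(\vu-\vu_B)\|_{L^2}^{3/2}$, and Young's inequality then leaves $C\|\Grad\vu\|_{L^2}^4\|D_t(\vu-\vu_B)\|_{L^2}^2$. This is not majorized by the right-hand side of \eqref{estimate 2}: the coefficient $\|\Grad\vu\|_{L^2}^4$ is itself one of the quantities under estimation (its uniform bound \eqref{e11} becomes available only after the Gronwall step), so the term is superlinear in the unknowns and breaks the linear Gronwall structure. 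A related objection applies to the companion piece $\int D_t(-\Grad p)\cdot D_t(\vu-\vu_B)$: the commutator $(\Grad\vu)^{\top}\Grad p\cdot D_t(\vu-\vu_B)$, which you do not address, contains $\Grad\vr$ through $\Grad p$, and $\Grad\vr$ is likewise uncontrolled at this stage.

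The paper avoids both problems by never performing your split: it applies the conservative form $g\mapsto\partial_tg+\Div(g\otimes\vu)$ to each term of \eqref{balance of momentum}. On the inertial term the continuity equation yields exactly $\frac12\frac{\textup{d}}{\dt}\int_\Omega\vr|D_t(\vu-\vu_B)|^2\,\dx$ with no leftover $\Div\vu$ piece, while on the pressure the combination $\Grad\DerTime p+\Div(\Grad p\otimes\vu)$ integrates by parts into $-\int[\DerTime p+\Div(p\vu)]\,\Div D_t(\vu-\vu_B)+\int p\,\Grad\vu:\Grad D_t(\vu-\vu_B)$, in which only $p$ itself (bounded by \eqref{boundedness thermodynamic quantitites}) and first derivatives appear. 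In that organization every $\Div\vu$ factor ends up multiplying $p$, $\Grad(\vu-\vu_B)$ or $\Grad\vu_B$, so all remainders are of the admissible types $\ve\int|\Grad D_t(\vu-\vu_B)|^2$, $C\int|\Grad(\vu-\vu_B)|^4$, $C\int\vr c_{\nu}|D_t(\vt-\vt_B)|^2$, or $C\int\vr|D_t(\vu-\vu_B)|^2$ with a coefficient depending only on $\mathfrak{D}_0$ (the last absorbed via \eqref{estimate 1}). The same caveat applies to your viscous commutators: naive commutation leaves $(\Grad\vu)\,\Grad^2(\vu-\vu_B)$ tested against $D_t(\vu-\vu_B)$, and the reduction to the cubic form $(\Grad\vu)(\Grad(\vu-\vu_B))\Grad D_t(\vu-\vu_B)$ that you assert requires the double integration by parts and the cancellation of the $\Delta_x(\vu-\vu_B)\otimes\vu$ term carried out explicitly in the paper --- precisely the ``bookkeeping'' you defer, which is where the argument actually lives.
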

	
	\begin{proof}
		Similarly to \cite[Section 4.2]{BasFeiMiz}, we apply the material derivative to the balance of momentum \eqref{balance of momentum}, take the scalar product of the resulting expression with $D_t(\vu-\vu_B)$ and integrate the resulting expression over $\Omega$, obtaining 
		\begin{equation} \label{e4}
			\begin{aligned}
				\int_{\Omega} &\vr D_t^2 (\vu-\vu_B) \cdot D_t (\vu-\vu_B) \ \dx + \int_{\Omega} \left[  \Grad \DerTime p + \Div ( \Grad p \otimes \vu) \right] \cdot D_t (\vu-\vu_B) \ \dx \\
				&= \mu \int_{\Omega} \left[\Delta_x \DerTime (\vu-\vu_B) + \Div \big(\Delta_x (\vu-\vu_B) \otimes \vu\big)\right] \cdot D_t (\vu-\vu_B) \ \dx \\
				&+ \left( \eta +\frac{\mu}{3} \right)  \int_{\Omega} \left[\Grad \Div \DerTime (\vu-\vu_B) + \Div \big(\Grad \Div (\vu-\vu_B) \otimes \vu \big)\right]  \dx \\
				&+ \int_{\Omega} \vr \vu \cdot \Grad \vf \cdot D_t (\vu-\vu_B)\ \dx - \int_{\Omega} \vr D_t^2 \vu_B \cdot D_t (\vu-\vu_B) \ \dx
			\end{aligned}
		\end{equation}
		Notice that 
		\begin{equation} \label{e5}
			\vr D_t^2 (\vu-\vu_B) \cdot D_t (\vu-\vu_B)= \frac{1}{2} \left[\DerTime\big(\vr |D_t (\vu-\vu_B)|^2\big) + \Div \big(\vr |D_t (\vu-\vu_B)|^2 \vu  \big)\right]. 
		\end{equation}
		while, from \eqref{boun cond v} we additionally deduce that
		\begin{equation} 
			\begin{aligned}
				\int_{\Omega} &\left[  \Grad \DerTime p + \Div ( \Grad p \otimes \vu) \right] \cdot D_t (\vu-\vu_B) \ \dx \\
				&= - \int_{\Omega} \left[ \DerTime p + \Div (p\vu)\right] \Div D_t (\vu-\vu_B) \ \dx + \int_{\Omega} p \Grad \vu \cdot \Grad D_t (\vu-\vu_B) \ \dx.
			\end{aligned}
		\end{equation}
		Moreover,
		\begin{equation*}
			\begin{aligned}
				\int_{\Omega} &\left[\Delta_x \DerTime (\vu-\vu_B) + \Div \big(\Delta_x (\vu-\vu_B) \otimes \vu\big)\right]  \cdot D_t (\vu-\vu_B) \ \dx \\
				=& -\int_{\Omega} \left[ \Grad \DerTime (\vu-\vu_B) + \big(\Delta_x (\vu-\vu_B) \otimes \vu\big)\right] : \Grad D_t (\vu-\vu_B) \ \dx \\
				=& - \int_{\Omega} |\Grad D_t (\vu-\vu_B) |^2 \ \dx \\
				&+ \int_{\Omega} \left[ \Grad \big(\vu \cdot \Grad (\vu-\vu_B)\big)- (\Delta_x (\vu-\vu_B) \otimes \vu) \right] : \Grad D_t (\vu-\vu_B) \ \dx
			\end{aligned}
		\end{equation*}
		using the summation convention and letting $\vv= \vu-\vu_B$, $\vw= D_t (\vu-\vu_B)$, we have
		\begin{align*}
			\int_{\Omega} &\Grad (\vu \cdot \Grad \vv): \Grad \vw \ \dx \\
			&= \int_{\Omega} (\Grad \vv \Grad \vu) : \Grad \vw \ \dx + \int_{\Omega} u_j \partial_{x_j}\partial_{x_k} v_i \partial_{x_k} w_i \ \dx \\
			&= \int_{\Omega} (\Grad \vv \Grad \vu) : \Grad \vw \ \dx  - \int_{\Omega} (\Div \vu) \Grad \vv : \Grad \vw \ \dx + \int_{\Omega} \partial_{x_j} \big(u_j \partial_{x_k} v_i \big) \partial_{x_k} w_i \ \dx \\
			&= \int_{\Omega} (\Grad \vv \Grad \vu) : \Grad \vw \ \dx  - \int_{\Omega} (\Div \vu) \Grad \vv : \Grad \vw \ \dx + \int_{\Omega} \partial_{x_k} \big(u_j \partial_{x_k} v_i \big) \partial_{x_j} w_i \ \dx \\
			&= \int_{\Omega} (\Grad \vv \Grad \vu) : \Grad \vw \ \dx  - \int_{\Omega} (\Div \vu) \Grad \vv : \Grad \vw \ \dx \\
			&+ \int_{\Omega} (\Delta_x \vv \otimes \vu ): \Grad \vw \ \dx + \int_{\Omega} \Grad \vv : (\Grad \vw \Grad \vu ) \ \dx,
		\end{align*}
		where in the fourth line we have performed an integration by parts. Therefore, we obtain 
		\begin{equation}
			\begin{aligned}
				\int_{\Omega} &\left[\Delta_x \DerTime (\vu-\vu_B) + \Div \big(\Delta_x (\vu-\vu_B) \otimes \vu\big)\right]  \cdot D_t (\vu-\vu_B) \ \dx \\
				= & - \int_{\Omega} |\Grad D_t (\vu-\vu_B) |^2 \ \dx - \int_{\Omega} (\Div \vu) \Grad (\vu-\vu_B) : \Grad D_t(\vu-\vu_B) \ \dx \\
				& + \int_{\Omega} \Big[(\Grad (\vu-\vu_B) \Grad \vu) : \Grad D_t(\vu-\vu_B)  +  \Grad (\vu-\vu_B) : \big(\Grad D_t(\vu-\vu_B) \Grad \vu \big)\Big] \  \dx.
			\end{aligned}
		\end{equation}
		Similarly,
		\begin{align*}
			\int_{\Omega} &\left[\Grad \Div \DerTime (\vu-\vu_B)+ \Div \big(\Grad \Div (\vu-\vu_B) \otimes \vu \big)\right]  \cdot D_t (\vu-\vu_B) \ \dx \\
			=& - \int_{\Omega} |\Div D_t (\vu-\vu_B)|^2 \ \dx \\
			&+ \int_{\Omega} \left[ \Div \big(\vu \cdot \Grad (\vu-\vu_B)\big) \ \Div D_t(\vu-\vu_B) - \big(\Grad \Div (\vu-\vu_B) \otimes \vu \big) : \Grad D_t(\vu-\vu_B) \right] \dx;
		\end{align*}
		once again, using the summation convention and letting $\vv= \vu-\vu_B$, $\vw= D_t (\vu-\vu_B)$, we obtain 
		\begin{align*}
			&\int_{\Omega}  \Div (\vu \cdot \Grad \vv) \ \Div \vw \ \dx \\
			&= \int_{\Omega} (\Grad \vv : \Grad^{\top} \vu) \ \Div \vw \ \dx + \int_{\Omega} u_k \partial_{x_k} \partial_{x_j} v_j \partial_{x_i} w_i \ \dx \\
			&= \int_{\Omega} (\Grad \vv : \Grad^{\top} \vu) \ \Div \vw \ \dx- \int_{\Omega} (\Div \vv) \ (\Div \vu) \ (\Div \vw) \ \dx + \int_{\Omega} \partial_{x_k} \big(  (\Div \vv) u_k \big) \partial_{x_i} w_i \ \dx \\
			&= \int_{\Omega} (\Grad \vv : \Grad^{\top} \vu) \ \Div \vw \ \dx- \int_{\Omega} (\Div \vv) \ (\Div \vu) \ (\Div \vw) \ \dx + \int_{\Omega} \partial_{x_i} \big(  (\Div \vv) u_k \big) \partial_{x_k} w_i \ \dx \\
			&= \int_{\Omega} (\Grad \vv : \Grad^{\top} \vu) \ \Div \vw \ \dx- \int_{\Omega} (\Div \vv) \ (\Div \vu) \ (\Div \vw) \ \dx \\
			&+ \int_{\Omega} (\Grad \Div \vv \otimes \vu ) : \Grad \vw \ \dx + \int_{\Omega} (\Div \vv) \Grad \vu : \Grad^{\top} \vw \ \dx.
		\end{align*}
		Thus, we conclude that 
		\begin{equation}
			\begin{aligned}
				\int_{\Omega} &\left[\Grad \Div \DerTime (\vu-\vu_B)+ \Div \big(\Grad \Div (\vu-\vu_B) \otimes \vu \big)\right]  \cdot D_t (\vu-\vu_B) \ \dx \\
				=& - \int_{\Omega} |\Div D_t (\vu-\vu_B)|^2 \ \dx - \int_{\Omega} (\Div \vu) \  \big(\Div (\vu-\vu_B) \big) \ \big(\Div D_t(\vu-\vu_B)\big ) \ \dx \\
				& + \int_{\Omega} \Big[\big(\Grad (\vu-\vu_B) : \Grad^{\top} \vu\big) \ \Div D_t(\vu-\vu_B) + \big(\Div (\vu-\vu_B)\big ) \Grad \vu : \Grad^{\top} D_t(\vu-\vu_B)\Big] \  \dx. 
			\end{aligned}
		\end{equation}
		Finally, notice that 
		\begin{equation} \label{e6}
				D_t^2 \vu_B = D_t (\vu-\vu_B) \cdot \Grad \vu_B + (\vu \otimes \vu): \Grad^2 \vu_B + (\vu \cdot \Grad \vu_B) \cdot \Grad \vu_B.
		\end{equation}
		Substituting \eqref{e5}--\eqref{e6} into \eqref{e4}, from \eqref{boundedness thermodynamic quantitites} and Young's inequality, we obtain that the following integral inequality
		\begin{equation*}
			\begin{aligned}
				\frac{1}{2} &\frac{\textup{d}}{\dt} \int_{\Omega} \vr |D_t (\vu-\vu_B)|^2 \ \dx + \mu \int_{\Omega} | \Grad D_t (\vu-\vu_B)|^2 \ \dx + \left( \eta + \frac{\mu}{3}\right) \int_{\Omega} |\Div D_t (\vu-\vu_B)|^2 \ \dx \\
				& \leq \ve_1 \int_{\Omega} | \Grad D_t (\vu-\vu_B)|^2 \ \dx + \ve_2 \int_{\Omega} |\Div D_t (\vu-\vu_B)|^2 \ \dx \\
				&+ \left(1+ \| \Grad \vu_B\|_{L^{\infty}(\Omega; \mathbb{R}^{3\times 3})} \right) \int_{\Omega} \vr |D_t (\vu-\vu_B)|^2 \ \dx \\
				&+ C(\ve_1, \ve_2; \overline{\vr}, \overline{\vt})  \left( 1+ \int_{\Omega}\vr c_{\nu}(\vr,\vt)  |D_t \vt|^2  \ \dx + \int_{\Omega} |\Grad (\vu-\vu_B)|^4 \ \dx  \right) \\
				&+  C(\ve_1, \ve_2; \overline{\vr}, \overline{u}, \overline{\vt}) \left( \| \Grad \vu_B\|_{L^4(\Omega; \mathbb{R}^{3\times 3 })}^4 + \| \Grad^2 \vu_B\|_{L^2(\Omega; \mathbb{R}^{3\times 3 \times 3})}^2 + \| \Grad \vf \|_{L^2(\Omega; \mathbb{R}^{3\times 3 })}^2  \right)
			\end{aligned}
		\end{equation*}
		holds for any $\ve_1, \ve_2>0$. Again, choosing $\ve_1, \ve_2$ small enough and using \eqref{estimate 1}, we obtain \eqref{estimate 2}.
	\end{proof}

	\subsection{Temperature estimates}
	
	\begin{lemma}[Temperature estimates]
		Under the hypotheses of Theorem \ref{Conditional regularity}, there holds
		\begin{equation} \label{estimate 3}
			\begin{aligned}
				\frac{\textup{d}}{\dt} &\int_{\Omega} |\Grad (\vt-\vt_B)|^2 \ \dx + \int_{\Omega} \vr c_{\nu}(\vr, \vt) |D_t (\vt-\vt_B)|^2 \ \dx  \\
				&\leq C(\overline{\vr}, \overline{u}, \overline{\vt}; \mathfrak{D}_0) \left(1+ \int_{\Omega} |\Grad  (\vt- \vt_B)|^2 \ \dx + \int_{\Omega} |\Grad (\vu-\vu_B)|^4 \ \dx   \right).
			\end{aligned}
		\end{equation}
	\end{lemma}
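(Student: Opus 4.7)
My plan is to parallel the velocity arguments just completed: reformulate the internal energy equation \eqref{balance internal energy} in non-conservative form, test with the material derivative $D_t(\vt-\vt_B)$, integrate by parts on the Laplacian, and absorb using Young's inequality together with \eqref{boundedness} and \eqref{boundedness thermodynamic quantitites}. Starting from \eqref{balance internal energy}, I rewrite $\DerTime(\vr e) + \Div(\vr e\,\vu) = \vr D_t e$ using the continuity equation, expand $D_t e = c_\nu(\vr,\vt) D_t\vt + \frac{\partial e}{\partial\vr}\,D_t\vr$, substitute $D_t\vr = -\vr\Div\vu$, and invoke Maxwell's relation \eqref{Maxwell relation} as $\vr^2\frac{\partial e}{\partial\vr} = p - \vt\frac{\partial p}{\partial\vt}$, obtaining the non-conservative equation
\begin{equation*}
\vr c_\nu(\vr,\vt)\,D_t\vt = \kappa\Delta_x\vt + \vS(\mathbb{D}_x\vu):\mathbb{D}_x\vu - \vt\,\frac{\partial p}{\partial\vt}(\vr,\vt)\,\Div\vu.
\end{equation*}

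Multiplying by $D_t(\vt-\vt_B)$ and integrating over $\Omega$, I first verify the test function vanishes on $\partial\Omega$: $\DerTime\vt|_{\partial\Omega}=0$ since $\vt_B$ is time-independent, and $\vu\cdot\Grad(\vt-\vt_B)|_{\partial\Omega}=0$ because $\vu_B$ is tangential while $\Grad(\vt-\vt_B)|_{\partial\Omega}$ is normal (as $\vt-\vt_B$ vanishes on $\partial\Omega$). The decomposition $D_t\vt = D_t(\vt-\vt_B) + \vu\cdot\Grad\vt_B$ on the LHS produces the desired $\int_\Omega\vr c_\nu|D_t(\vt-\vt_B)|^2\,\dx$ plus a cross-term absorbed via Young's. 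For the Laplacian, integration by parts yields $-\int_\Omega\kappa\Grad\vt\cdot\Grad D_t(\vt-\vt_B)\,\dx$, and after splitting $\Grad\vt = \Grad(\vt-\vt_B) + \Grad\vt_B$, the $\Grad\vt_B$ contribution vanishes identically using the harmonicity $\Delta_x\vt_B=0$ from \eqref{extension vt_B} together with the boundary cancellations above. The $\Grad(\vt-\vt_B)$ piece, after expanding $\Grad D_t(\vt-\vt_B)$ into its commutator parts and a further integration by parts using $\vu\cdot\textbf{n}|_{\partial\Omega}=0$, produces $-\frac{\kappa}{2}\frac{\textup{d}}{\dt}\int_\Omega|\Grad(\vt-\vt_B)|^2\,\dx$ plus lower-order terms pointwise bounded by $|\Grad\vu||\Grad(\vt-\vt_B)|^2$.

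The remaining RHS contributions $(\vS:\mathbb{D}_x\vu)D_t(\vt-\vt_B)$ and $\vt\frac{\partial p}{\partial\vt}\,\Div\vu\,D_t(\vt-\vt_B)$ are controlled by Young's inequality weighted with $\sqrt{\vr c_\nu}$, producing $\varepsilon\vr c_\nu|D_t(\vt-\vt_B)|^2$ together with factors of $|\Grad\vu|^4$ or lower-order terms (using the uniform bound \eqref{boundedness thermodynamic quantitites} on $\vt\,\partial p/\partial\vt$). Splitting $\Grad\vu = \Grad(\vu-\vu_B)+\Grad\vu_B$ and using boundedness of $\Grad\vu_B$ reduces everything to $C(1+|\Grad(\vu-\vu_B)|^4)$, while the commutator term $|\Grad\vu||\Grad(\vt-\vt_B)|^2$ is handled analogously to contribute $C(1 + |\Grad(\vt-\vt_B)|^2 + |\Grad(\vu-\vu_B)|^4)$. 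Choosing $\varepsilon$ small enough to absorb the $\vr c_\nu|D_t(\vt-\vt_B)|^2$ terms into the LHS then yields \eqref{estimate 3}.

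I expect two main obstacles. First is the cancellation of the $\Grad\vt_B$ contribution to the Laplacian: it requires exploiting simultaneously the harmonicity of $\vt_B$ and the precise compatibility between the tangentiality of $\vu_B$ and the normal orientation of $\Grad(\vt-\vt_B)$ on $\partial\Omega$, since any failure in this interplay leaves uncontrolled boundary or bulk terms. Second is arranging the Young's inequalities carefully enough that no $L^4$-norm of $\Grad(\vt-\vt_B)$ appears on the RHS, which requires systematically exploiting the $\vr c_\nu$ weight on the LHS together with the boundedness of the thermodynamic coefficients from \eqref{boundedness thermodynamic quantitites}, and for the commutator term a multilinear Young's inequality carefully tuned to yield only $|\Grad(\vu-\vu_B)|^4$ and $|\Grad(\vt-\vt_B)|^2$ contributions.
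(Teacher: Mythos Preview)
Your approach has a genuine gap in the treatment of the viscous dissipation term $\int_\Omega(\vS(\mathbb{D}_x\vu):\mathbb{D}_x\vu)\,D_t(\vt-\vt_B)\,\dx$. Your proposed Young's inequality with weight $\sqrt{\vr c_\nu}$ produces
\[
\ve\int_\Omega\vr c_\nu|D_t(\vt-\vt_B)|^2\,\dx + C(\ve)\int_\Omega\frac{|\Grad\vu|^4}{\vr c_\nu}\,\dx,
\]
but at this stage of the argument no lower bound on $\vr c_\nu$ is available: the positivity of $\vr$ is only established later in Section~\ref{posit}, and no uniform lower bound on $c_\nu$ is assumed beyond $c_\nu>0$. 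The factor $(\vr c_\nu)^{-1}$ is therefore uncontrolled and cannot be absorbed into the constant $C(\overline{\vr},\overline{u},\overline{\vt};\mathfrak{D}_0)$. A second gap appears in your commutator term $\int_\Omega|\Grad\vu|\,|\Grad(\vt-\vt_B)|^2\,\dx$: there is no Young-type inequality bounding $ab^2$ by $C(a^4+b^2+1)$ uniformly (take $a=b^{1/2}$, $b\to\infty$), so this term cannot be reduced to the right-hand side of \eqref{estimate 3} as you claim.

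The paper avoids both difficulties by testing with $\DerTime(\vt-\vt_B)$ rather than $D_t(\vt-\vt_B)$. This makes the Laplacian contribution exactly $\tfrac{\kappa}{2}\tfrac{\textup{d}}{\dt}\int_\Omega|\Grad(\vt-\vt_B)|^2\,\dx$ with no commutator, while the convective piece $\vu\cdot\Grad(\vt-\vt_B)$ reappears harmlessly multiplied by $\vr c_\nu$ on the other side. For the viscous dissipation, the paper instead integrates by parts in time,
\[
\int_\Omega(\vS:\mathbb{D}_x\vu)\,\DerTime\vt\,\dx = \frac{\textup{d}}{\dt}\int_\Omega\vt\,\vS:\mathbb{D}_x\vu\,\dx - \int_\Omega\vt\,\DerTime(\vS:\mathbb{D}_x\vu)\,\dx,
\]
shifting the time derivative onto $\vu$; the resulting terms are rewritten in terms of $\Grad D_t(\vu-\vu_B)$ and absorbed via $\ve_2\int_\Omega|\Grad D_t(\vu-\vu_B)|^2\,\dx$, which is then controlled using the velocity estimate \eqref{estimate 2}, while the leftover total derivative $\tfrac{\textup{d}}{\dt}\int_\Omega|\Grad(\vu-\vu_B)|^2\,\dx$ is handled via \eqref{estimate 1}. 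The temperature lemma is thus not self-contained: it closes only by invoking both velocity lemmas, a coupling your outline does not use and cannot avoid.
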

	\begin{proof}
		From the continuity equation \eqref{continuity equation}, \eqref{extension vt_B} and Maxwell relation \ref{Maxwell relation}, the balance of internal energy \eqref{balance internal energy} can be rewritten as 
		\begin{equation} \label{balance of internal energy rev }
			\vr c_{\nu}(\vr, \vt) D_t (\vt-\vt_B) - \kappa \Delta_x (\vt-\vt_B) + \vt \frac{\partial p}{\partial \vt} \Div \vu = \vS (\mathbb{D}_x \vu) : \mathbb{D}_x \vu - \vr c_{\nu}(\vr, \vt) \vu \cdot \Grad \vt_B.
		\end{equation}
		Following \cite[Section 4.4]{BasFeiMiz}, we multiply the previous identity by $\DerTime (\vt-\vt_B)$ and integrate the resulting expression over $\Omega$ to get
		\begin{equation} \label{e9}
			\begin{aligned}
				&\frac{\kappa}{2} \frac{\textup{d}}{\dt} \int_{\Omega} |\Grad (\vt-\vt_B)|^2 \ \dx + \int_{\Omega} \vr c_{\nu} |D_t (\vt-\vt_B)|^2 \ \dx \\
				&= \int_{\Omega} \vr c_{\nu} D_t (\vt-\vt_B) \ \big(\vu \cdot \Grad (\vt-\vt_B) \big) \ \dx - \int_{\Omega} \vt \frac{\partial p}{\partial \vt} D_t (\vt-\vt_B) \ \Div \vu \ \dx \\
				&+ \int_{\Omega} \vt \frac{\partial p}{\partial \vt} \ \Div \vu \ \big(\vu \cdot \Grad (\vt-\vt_B) \big) \ \dx - \int_{\Omega} \vr c_{\nu} D_t (\vt-\vt_B) \ (\vu \cdot \Grad \vt_B) \ \dx \\
				&+ \int_{\Omega} \vr c_{\nu} (\vu \cdot \Grad \vt_B) \ \big(\vu \cdot \Grad (\vt-\vt_B) \big) \ \dx + \frac{\textup{d}}{\dt} \int_{\Omega} \vt \  \vS(\mathbb{D}_x \vu): \mathbb{D}_x \vu \ \dx \\
				&- 2\mu \int_{\Omega} \vt \ \mathbb{D}_x \vu : \Grad \DerTime \vu \ \dx - 2\left( \eta - \frac{2}{3} \mu \right) \int_{\Omega} \vt \  \Div \vu \ \Div \DerTime \vu \ \dx.
			\end{aligned}
		\end{equation}
		We will now focus on the last two integrals. We have 
		\begin{equation} \label{e7}
			\begin{aligned}
				&\int_{\Omega} \vt \ \mathbb{D}_x\vu : \Grad \DerTime \vu \ \dx \\
				&= \int_{\Omega} \vt \  \mathbb{D}_x \vu : \Grad D_t (\vu-\vu_B) \ \dx \\
				&- \int_{\Omega} \vt \ \mathbb{D}_x (\vu-\vu_B) : \Grad \big(\vu \cdot \Grad (\vu-\vu_B)\big) \ \dx - \int_{\Omega} \vt \ \mathbb{D}_x \vu_B : \Grad \big(\vu \cdot \Grad (\vu-\vu_B)\big) \ \dx \\
				&= \int_{\Omega} \vt \  \mathbb{D}_x \vu : \Grad D_t (\vu-\vu_B) \ \dx \\
				& -\int_{\Omega} \vt \  \mathbb{D}_x (\vu-\vu_B) : \big(\Grad (\vu-\vu_B) \Grad \vu\big) \ \dx + \frac{1}{2} \int_{\Omega} \big(\mathbb{D}_x (\vu-\vu_B) : \Grad (\vu-\vu_B)\big) \ \Div (\vt \vu ) \  \dx \\
				&- \int_{\Omega} \vt \  \mathbb{D}_x \vu_B : \big(\Grad (\vu-\vu_B) \Grad \vu\big) \ \dx + \int_{\Omega} \big(\mathbb{D}_x  \vu_B : \Grad (\vu-\vu_B)\big)\ \Div (\vt \vu) \ \dx \\
				&+ \int_{\Omega} \vt \  \mathbb{D}_x  (\vu-\vu_B) : (\vu \cdot \Grad^2 \vu_B) \ \dx.
			\end{aligned}
		\end{equation}
		Similarly, 
		\begin{equation} \label{e8}
			\begin{aligned}
				&\int_{\Omega} \vt \  \Div \vu \ \Div \DerTime \vu \ \dx \\
				&= \int_{\Omega} \vt \  \Div \vu \ \Div D_t (\vu-\vu_B) \ \dx \\
				&- \int_{\Omega} \vt \  \Div (\vu-\vu_B) \  \Div \big(\vu\cdot \Grad (\vu-\vu_B)\big) \ \dx - \int_{\Omega} \vt \  \Div \vu_B \  \Div \big(\vu\cdot \Grad (\vu-\vu_B)\big) \ \dx \\
				&= \int_{\Omega} \vt \  \Div \vu \ \Div D_t (\vu-\vu_B) \ \dx \\
				&- \int_{\Omega} \vt \ \Div (\vu-\vu_B) \ \big(\Grad \vu: \Grad^{\top} (\vu-\vu_B)\big) \ \dx + \frac{1}{2} \int_{\Omega} \big(\Div (\vu-\vu_B)\big)^2 \ \Div(\vt \vu) \ \dx \\
				&- \int_{\Omega} \vt \ \Div \vu_B \ \big(\Grad \vu: \Grad^{\top} (\vu-\vu_B)\big) \ \dx + \int_{\Omega} (\Div \vu_B) \ \big(\Div (\vu-\vu_B) \big) \ \Div (\vt \vu) \ \dx \\
				&+ \int_{\Omega} \vt \ \big(\Div (\vu-\vu_B) \big) \ \vu \cdot \Grad \Div \vu_B \ \dx.
			\end{aligned}
		\end{equation}
		As before, substituting \eqref{e7} and \eqref{e8} into \eqref{e9}, we obtain that  the following integral inequality
		\begin{equation*}
			\begin{aligned}
				\frac{\kappa}{2} &\frac{\textup{d}}{\dt} \int_{\Omega} |\Grad (\vt-\vt_B)|^2 \ \dx + \int_{\Omega} \vr c_{\nu}(\vr, \vt) |D_t (\vt-\vt_B)|^2 \ \dx \\
				&\leq \ve_1 \int_{\Omega} \vr c_{\nu}(\vr, \vt) |D_t (\vt-\vt_B)|^2 \ \dx + \ve_2 \int_{\Omega} |\Grad D_t (\vu-\vu_B) |^2 \ \dx \\
				&+ C(\overline{\vt}) \frac{\textup{d}}{\dt} \int_{\Omega} |\Grad (\vu-\vu_B)|^2 \ \dx \\
				&+ C(\ve_1, \ve_2; \overline{\vr}, \overline{u}, \overline{\vt})  \left( 1+ \int_{\Omega} |\Grad (\vt-\vt_B)|^2  \ \dx + \int_{\Omega} |\Grad (\vu-\vu_B)|^4 \ \dx  \right) \\
				&+  C(\ve_1, \ve_2; \overline{\vr}, \overline{u}, \overline{\vt}) \left( \| \Grad \vu_B\|_{L^4(\Omega; \mathbb{R}^{3\times 3 })}^4 + \| \Grad^2 \vu_B\|_{L^2(\Omega; \mathbb{R}^{3\times 3 \times 3})}^2 + \| \Grad \vt_B \|_{L^2(\Omega; \mathbb{R}^{3})}^2  \right)
			\end{aligned}
		\end{equation*}
		holds for any $\ve_1, \ve_2>0$.  Moreover, from \eqref{estimate 1} we have that 
		\begin{equation*}
			\begin{aligned}
				\frac{\textup{d}}{\dt} \int_{\Omega} |\Grad (\vu-\vu_B)|^2 \ \dx &\leq \ve \int_{\Omega} \vr c_{\nu}(\vr,\vt) |D_t (\vt-\vt_B)|^2 \ \dx \\
				&+ C(\ve; \overline{\vr}, \overline{u}, \overline{\vt}; \mathfrak{D}_0) \left(1+ \int_{\Omega} |\Grad (\vu-\vu_B) |^4 \ \dx   \right),
			\end{aligned}
		\end{equation*}
		holds for any $\ve>0$. Therefore, using \eqref{estimate 2} and choosing $\ve_1, \ve_2$ sufficiently small, we get \eqref{estimate 3}. 
	\end{proof}

	\subsection{Gronwall argument}
	
	Putting together estimates \eqref{estimate 1}, \eqref{estimate 2} and \eqref{estimate 3}, we finally obtain 
	\begin{equation*}
		\begin{aligned}
			\frac{\textup{d}}{\dt} &\int_{\Omega} \Big(\vr |D_t(\vu-\vu_B)|^2 + |\Grad (\vu-\vu_B)|^2+ |\Grad (\vt-\vt_B)|^2 \Big) \ \dx \\
			&+ \int_{\Omega}  \Big( \vr |D_t(\vu-\vu_B)|^2 + \vr c_{\nu}(\vr,\vt) |D_t(\vt-\vt_B)|^2 + |\Grad D_t(\vu-\vu_B)|^2 \Big) \  \dx \\
			&\leq C(\overline{\vr}, \overline{u}, \overline{\vt}; \mathfrak{D}_0) \left(1+ \int_{\Omega} |\Grad  (\vt- \vt_B)|^2 \ \dx + \int_{\Omega} |\Grad (\vu-\vu_B)|^4 \ \dx   \right).
		\end{aligned}
	\end{equation*}
	We can now integrate the previous inequality over $[0, \tau]$, $0<\tau \leq T< T_{\rm max}$ to get 
	\begin{equation} \label{final estimate 1}
		\begin{aligned}
			\int_{\Omega} &\Big( \vr |D_t(\vu-\vu_B)|^2 + |\Grad (\vu-\vu_B)|^2+ |\Grad (\vt-\vt_B)|^2 \Big) (\tau, \cdot ) \  \dx \\
			&+ \int_{0}^{\tau}\int_{\Omega}  \left(\vr |D_t(\vu-\vu_B)|^2 + \vr c_{\nu}(\vr, \vt) |D_t(\vt-\vt_B)|^2 + |\Grad D_t(\vu-\vu_B)|^2 \right) \dx \dt \\
			&\leq C(T_{\rm max}; \overline{\vr}, \overline{u}, \overline{\vt}; \mathfrak{D}_0) \left(1+\int_{0}^{\tau} \int_{\Omega} |\Grad  (\vt- \vt_B)|^2 \ \dx \dt + \int_{0}^{\tau}\int_{\Omega} |\Grad (\vu-\vu_B)|^4 \ \dx  \dt  \right).
		\end{aligned}
	\end{equation}
	In order to apply the Gronwall argument, we need to properly estimate the last term appearing on the right-hand side of \eqref{final estimate 1}. To this end, we follow the idea developed in \cite{SunWanZha1} and adapted in \cite[Section 4.3]{BasFeiMiz}: we decompose $\vu-\vu_B$ as 
	\begin{align}
		\vu-\vu_B&= \vv +\vw, \\
		\Div \vS (\mathbb{D}_ x \vv) &= \Grad p,  \\ 
		\Div \vS (\mathbb{D}_ x \vw) &= \vr D_t (\vu-\vu_B)- \vr \vf + \vr (\vu \cdot \nabla_x \vu_B)  \label{v5}
	\end{align}
	in $(0,T) \times \Omega$, with $\vv|_{\partial \Omega}= \vw|_{\partial \Omega} =0$. Conditions \eqref{boundedness} and \eqref{boundedness thermodynamic quantitites} yield
	\begin{align}
		\| \vv \|_{W^{1,q}(\Omega; \mathbb{R}^3)} &\leq c(q; \overline{\vr}, \overline{\vt}), \label{v1}\\
		\| \vv \|_{W^{2,q}(\Omega; \mathbb{R}^3)} &\leq c(q; \overline{\vr}, \overline{\vt}) \left( \| \Grad \vr \|_{L^q(\Omega)} + \| \Grad \vt \|_{L^q(\Omega)} \right), \label{v2}\\
		\| \vw \|_{W^{2,2}(\Omega; \mathbb{R}^3)} &\leq c(\overline{\vr}, \overline{u}; \mathfrak{D}_0) \left( 1+\| \sqrt{\vr} D_t (\vu-\vu_B)\|_{L^2(\Omega; \mathbb{R}^3)}\right), \label{v3}
	\end{align}
	for any $1 \leq q<\infty$. Notice, in particular, that from \eqref{boundedness} and \eqref{v1}
	\begin{equation*}
		\| \vw \|_{L^{\infty}(\Omega)} \leq \| \vu- \vu_B \|_{L^{\infty}(\Omega)} + \| \vv \|_{L^{\infty}(\Omega)} \leq C(\overline{\vr}, \overline{u}, \overline{\vt}; \mathfrak{D}_0),
	\end{equation*}
	and therefore we can use the Gagliardo-Nirenberg interpolation inequality and \eqref{e3} to deduce that 
	\begin{equation} \label{v4}
		\| \Grad \vw \|_{L^4(\Omega)}^2 \lesssim \| \vw \|_{L^{\infty}(\Omega)} \| \Delta_x \vw \|_{L^2(\Omega)} \leq C(\overline{\vr}, \overline{u}, \overline{\vt}; \mathfrak{D}_0) \left( 1+\| \sqrt{\vr} D_t (\vu-\vu_B) \|_{L^2(\Omega)} \right).
	\end{equation}
	Hence, from \eqref{v1} and \eqref{v4}, we deduce that  
	\begin{equation} \label{e15}
		\begin{aligned}
			\| \Grad (\vu-\vu_B) \|_{L^4(\Omega)}^4 &\leq \| \Grad \vv \|_{L^4(\Omega)}^4 + \| \Grad \vw \|_{L^4(\Omega)}^4 \\
			&\leq C(\overline{\vr}, \overline{u}, \overline{\vt}; \mathfrak{D}_0) \left( 1+\int_{\Omega}\vr |D_t (\vu-\vu_B)|^2 \ \dx \right). 
		\end{aligned}
	\end{equation}
	Substituting \eqref{e15} into \eqref{final estimate 1}, we obtain 
	\begin{equation} \label{final estimate 2}
		\begin{aligned}
			\int_{\Omega} &\big(\vr |D_t(\vu-\vu_B)|^2 + |\Grad (\vu-\vu_B)|^2+ |\Grad (\vt-\vt_B)|^2 \big) (\tau, \cdot ) \  \dx \\
			&+ \int_{0}^{\tau}\int_{\Omega}  \Big(\vr |D_t(\vu-\vu_B)|^2 + \vr c_{\nu}(\vr, \vt) |D_t(\vt-\vt_B)|^2 + |\Grad D_t(\vu-\vu_B)|^2 \Big) \dx \dt \\
			&\leq C(T_{\rm max }; \overline{\vr}, \overline{u}, \overline{\vt}; \mathfrak{D}_0) \left(1+ \int_{0}^{\tau}\int_{\Omega}  \vr |D_t(\vu-\vu_B)|^2 \ \dx\dt + \int_{0}^{\tau} \int_{\Omega} |\Grad  (\vt- \vt_B)|^2 \ \dx \dt  \right).
		\end{aligned}
	\end{equation}
	By a standard Gronwall argument we can deduce the following bounds:
	\begin{align}
		\sup_{t\in [0,T]} \| \sqrt{\vr} D_t (\vu -\vu_B)(t, \cdot)\|_{L^2(\Omega; \mathbb{R}^3)} & \leq C(T_{\rm max}; \overline{\vr}, \overline{u}, \overline{\vt}; \mathfrak{D}_0), \label{e10}\\
		\sup_{t\in [0,T]} \| (\vu-\vu_B) (t, \cdot)\|_{W^{1,2}_0(\Omega; \mathbb{R}^3)} & \leq C(T_{\rm max}; \overline{\vr}, \overline{u}, \overline{\vt}; \mathfrak{D}_0), \label{e11}\\
		\sup_{t\in [0,T]} \| (\vt-\vt_B) (t, \cdot)\|_{W^{1,2}_0(\Omega)} & \leq C(T_{\rm max}; \overline{\vr}, \overline{u}, \overline{\vt}; \mathfrak{D}_0), \label{e12}\\
		\int_{0}^{T} \int_{\Omega} \vr c_{\nu}(\vr, \vt) |D_t (\vt-\vt_B)|^2 \ \dx \dt &\leq C(T_{\rm max}; \overline{\vr}, \overline{u}, \overline{\vt}; \mathfrak{D}_0), \label{e13}\\
		\int_{0}^{T} \int_{\Omega} |\Grad D_t (\vu-\vu_B)|^2 \ \dx \dt &\leq C(T_{\rm max}; \overline{\vr}, \overline{u}, \overline{\vt}; \mathfrak{D}_0). \label{e14}
	\end{align}
	
	\section{Positivity of the density and the temperature}\label{posit}
	
	In this section. our goal is to obtain the bounds from below for the density and temperature, i.e. to prove estimate \eqref{bound from below density and temperature}.
	
	\subsection{Higher-order estimates}
	
	Applying the standard parabolic estimates to the internal energy balance \eqref{balance of internal energy rev }, from \eqref{e15}, \eqref{e11}, \eqref{e13} and \eqref{e14} we deduce 
	\begin{equation} \label{e16}
		\int_{0}^{T} \| (\vt-\vt_B)(t,\cdot)\|_{W^{2,2}(\Omega)}^2 \ \dt \leq C(T_{\max}; \overline{\vr}, \overline{u}, \overline{\vt}; \mathfrak{D}_0).
	\end{equation}
	Therefore, we may proceed as in \cite[Section 5]{SunWanZha1} to deduce from \eqref{v5}, \eqref{e14}, \eqref{e16} and the Sobolev embedding $W^{1,2}(\Omega) \hookrightarrow L^6(\Omega)$ that 
	\begin{equation} \label{e17}
		\sup_{t\in [0,T]} \| \Grad \vr (t,\cdot) \|_{L^6(\Omega; \mathbb{R}^3)} \leq C(T_{\max}; \overline{\vr}, \overline{u}, \overline{\vt}; \mathfrak{D}_0).
	\end{equation}
	Adapting the Lam\'{e} estimates to the balance of momentum \eqref{balance of momentum}, we finally obtain from \eqref{e14}, \eqref{e16}, \eqref{e17} and the Sobolev embedding $W^{1,2}(\Omega) \hookrightarrow L^6(\Omega)$ that
	\begin{equation} \label{e18}
		\int_{0}^{T} \| (\vu-\vu_B)(t, \cdot)\|_{W^{2,6}(\Omega; \mathbb{R}^3)}^2 \ \dt \leq C(T_{\max}; \overline{\vr}, \overline{u}, \overline{\vt}; \mathfrak{D}_0).
	\end{equation}

	\subsection{Lower bound for the density}
	
	Following \cite[Section 6.1]{BasFeiMiz}, from \eqref{e18} and the Sobolev embedding $W^{1,6}(\Omega) \hookrightarrow L^{\infty}(\Omega)$, we deduce that $\Div \vu \in L^1(0,T; L^{\infty}(\Omega))$. Consequently, from the continuity equation \eqref{continuity equation} and, in particular, from the fact that 
	\begin{equation*}
		\inf_{\overline{\Omega}}\vr_0 \ \exp \left(-\int_{0}^{T} \| \Div \vu \|_{L^{\infty}(\Omega)} \ \dt \right) \leq \vr(t, x )  \leq \sup_{\overline{\Omega}}\vr_0 \ \exp \left(\int_{0}^{T} \| \Div \vu \|_{L^{\infty}(\Omega)} \ \dt \right),
	\end{equation*}
	we recover a positive lower bound on the density, 
	\begin{equation} \label{positivity density}
		\inf_{(t,x) \in [0,T] \times \overline{\Omega}} \vr (t,x) \geq \underline{\vr} >0,
	\end{equation}
	with $\underline{\vr}= \underline{\vr}(T_{\rm max}; \overline{\vr}, \overline{u}, \overline{\vt}; \mathfrak{D}_0) $. 
	
	\subsection{Lower bound for the temperature}
	
	First of all, from the strict positivity of the bulk viscosity and relation \eqref{pressure and internal energy}, we can write 
	\begin{equation} \label{e19}
 \vt \frac{\partial p}{\partial \vt} \Div \vu \geq -  \frac{\eta}{2} |\Div \vu|^2 -  C(\overline{\vr}, \overline{\vt}; \mathfrak{D}_0)  \vt \vr c_{\nu}(\vr, \vt) .
	\end{equation} 
	Moreover, we point out that the temperature $\vt$ constructed in Theorem \ref{Local existence} is strictly positive; therefore, due to thermodynamic stability \eqref{thermodynamic stability}, we can divide the balance of internal energy \eqref{balance of internal energy rev } by $\vr c_{\nu}(\vr, \vt)$ and use \eqref{e19} to get
	\begin{equation*}
		\DerTime \vt + \vu \cdot \Grad \vt - \frac{\kappa}{\vr c_{\nu}(\vr, \vt)} \Delta_x \vt +  c_0 \vt \geq 0.
	\end{equation*}
	 with $c_0=c_0(\overline{\vr}, \overline{\vt}; \mathfrak{D}_0)$.
	Now, it's enough to apply the standard minimum principle for parabolic equations to get the for any $\lambda >c_0$, 
	\begin{equation} \label{positivity temperature}
		\inf_{(t,x) \in [0,T] \times \overline{\Omega}} \vt (t,x) \geq \underline{\vt}:=  e^{-\lambda T} \min \{ \underline{\vt}_0, \underline{\vt}_B \};
	\end{equation}
	in particular, if $T_{\max}$ is finite, we obtain that $\underline{\vt}=  \underline{\vt}(T_{\rm max}; \overline{\vr}, \overline{u}, \overline{\vt}; \mathfrak{D}_0) $.
	
	\section{Final estimate}\label{final}
	
	Having obtained \eqref{bound from below density and temperature}, we can now proceed as in \cite[Sections 7 and 8]{BasFeiMiz} to get the final estimates \eqref{final estimate}. 
	
	\subsection{H\"{o}lder-continuity} 
	
		We begin pointing out that, putting together \eqref{v1}, \eqref{v3} and \eqref{e10}, for any $0< T< T_{\rm max}$,
		\begin{equation} \label{e20}
			\sup_{t\in [0,T]} \| \Grad (\vu-\vu_B)(t, \cdot) \|_{L^6(\Omega; \mathbb{R}^{3\times 3})}  \leq  C(T_{\rm max}; \overline{\vr}, \overline{u}, \overline{\vt}; \mathfrak{D}_0).
		\end{equation}
		Therefore, from the continuity equation \eqref{continuity equation} and \eqref{e17},
		\begin{equation*}
			\sup_{t\in [0,T]} \left(\| \DerTime \vr(t, \cdot)\|_{L^6(\Omega)} + \| \vr (t, \cdot) \|_{W^{1,6}(\Omega)} \right) \leq C(T_{\rm max}; \overline{\vr}, \overline{u}, \overline{\vt}; \mathfrak{D}_0);
		\end{equation*}
		by interpolation, we deduce that $\vr$ is H\"{o}lder-continuous on $[0,T] \times \overline{\Omega}$. Moreover, from \eqref{positivity density}, \eqref{positivity temperature} and the termodynamic stability \eqref{thermodynamic stability}, we can divide the balance of internal energy \eqref{balance of internal energy rev } by $\vr c_{\nu} (\vr,\vt)$ and apply the standard parabolic theory to get that $\vt$ is H\"{o}lder-continuous on $[0,T] \times \overline{\Omega}$. Summarizing, for some $\alpha>0$, we have that 
		\begin{equation} \label{Holder continuity}
			\| (\vr, \vt) \|_{C^{\alpha}([0,T] \times \overline{\Omega}; \mathbb{R}^2)} \leq C(T_{\rm max}; \overline{\vr}, \overline{u}, \overline{\vt}; \mathfrak{D}_0).
		\end{equation}
		\subsection{Parabolic regularity} 
		
		Having established the H\"{o}lder-continuity for  $\vr$ and $\vt$, we can now apply the maximal $L^p$-$L^q$ regularity to equation
		\begin{equation} \label{bal int ene}
			\DerTime \vt + \vu \cdot \Grad \vt - \frac{\kappa}{\vr c_{\nu}(\vr, \vt)} \Delta_x \vt= \frac{1}{\vr c_{\nu}(\vr, \vt)} \left(\vS (\mathbb{D}_x \vu): \mathbb{D}_x \vu - \vt \frac{\partial p}{\partial \vt} \Div \vu  \right).
		\end{equation}
		More precisely, noticing that from \eqref{positivity density}, \eqref{positivity temperature} and \eqref{Holder continuity},
		\begin{equation*}
			\sup_{(t,x) \in [0,T] \times \overline{\Omega}} \frac{1}{\big(\vr c_{\nu}(\vr, \vt)\big)(t,x)} \leq C(T_{\rm max}; \overline{\vr}, \overline{u}, \overline{\vt}; \mathfrak{D}_0),
		\end{equation*}
		and that, from \eqref{boundedness thermodynamic quantitites}, \eqref{e20}, the right-hand side of \eqref{bal int ene} is bounded in $L^{\infty}(0,T; L^3(\Omega))$ by a constant independent of $0<T< T_{\rm max}$, we can apply \cite[Theorem 2.3]{DenHeiPru} to get that the following estimate
		\begin{equation} \label{e21}
			\| \DerTime \vt \|_{L^p(0,T; L^3(\Omega))}  + \| \vt \|_{L^p(0,T; W^{2,3}(\Omega))} \leq C(T_{\rm max}; \overline{\vr}, \overline{u}, \overline{\vt}; \mathfrak{D}_0)
		\end{equation}
		holds for any $1<p<\infty$. A similar argument can be repeated for the balance of momentum \eqref{balance of momentum},
		\begin{equation} \label{bal mom}
			\DerTime \vu + \vu \cdot \Grad \vu - \frac{1}{\vr} \left[\mu \Delta_x \vu + \left( \eta + \frac{\mu}{3}\right) \nabla_x \Div \vu \right] = -\frac{1}{\vr} \left( \frac{\partial p }{\partial \vr} \Grad \vr + \frac{\partial p}{\partial \vt} \Grad \vt \right) + \vf
		\end{equation}
		where now we use \eqref{e17} and \eqref{e21} to deduce that the right-hand side of \eqref{bal mom} is bounded in $L^{\infty}(0,T; L^6(\Omega))$ by a constant independent of $0<T< T_{\rm max}$. Thus, we obtain that the following estimate
		\begin{equation} \label{e22}
			\| \DerTime \vu \|_{L^p(0,T; L^6(\Omega))}  + \| \vu \|_{L^p(0,T; W^{2,6}(\Omega))} \leq C(T_{\rm max}; \overline{\vr}, \overline{u}, \overline{\vt}; \mathfrak{D}_0)
		\end{equation}
		holds for any $1<p<\infty$, from which, by interpolation, it is easy to deduce that 
		\begin{equation*}
			\sup_{(t,x) \in [0,T] \times \overline{\Omega}} | \Grad \vu(t,x) | \leq C(T_{\rm max}; \overline{\vr}, \overline{u}, \overline{\vt}; \mathfrak{D}_0). 
		\end{equation*}
	
		\subsection{Conclusion}
		
		Following \cite[Section 4.6]{FeiNovSun}, the final estimate \eqref{final estimate} can be obtained by applying the standard energy method and elliptic estimates to equations \eqref{balance of momentum} and \eqref{balance of internal energy rev }, after differentiating the latter with respect to time; we omit the details as the argument is identical to the one presented in \cite[Section 8]{BasFeiMiz}.
		
		\subsection{Acknowledgement}
		A.~Abbatiello is member of the GNAMPA - INdAM. The work of D. Basari\'{c} was supported by the Czech Sciences Foundation (GA\v CR), Grant Agreement 21--02411S; the Institute of Mathematics of the Academy of Sciences of the Czech Republic is supported by RVO:67985840. The work of N. Chaudhuri is supported by the  ``Excellence Initiative Research University(IDUB)" Programme at University of Warsaw.
		
		\providecommand{\bysame}{\leavevmode\hbox to3em{\hrulefill}\thinspace}
\providecommand{\MR}{\relax\ifhmode\unskip\space\fi MR }
\providecommand{\MRhref}[2]{%
  \href{http://www.ams.org/mathscinet-getitem?mr=#1}{#2}
}
\providecommand{\href}[2]{#2}

\end{document}